\documentclass{Rsepubli}

\usepackage[colorlinks=true,urlcolor=blue,
citecolor=red,linkcolor=blue,linktocpage,pdfpagelabels,
bookmarksnumbered,bookmarksopen]{hyperref}

\newcommand{\h}{H^{1}(\mathbb{R}_+^{N+1})}

\newcommand{\permanent}[1]{\footnote{Permanent address: #1}}

\newcommand{\email}[1]{{\upshape(\texttt{#1})}}

\newtheorem{remark}[theorem]{Remark}

\begin{document}

\title[Ground states for the pseudo-relativistic Hartree equation]{Ground states for the pseudo-relativistic Hartree equation with external potential}
\author[S. Cingolani and S. Secchi]{Silvia Cingolani\permanent{Dipartimento di Meccanica, Matematica e Management,
Politecnico di Bari. Via Orabona 4, 70125 Bari (Italy)}\email{s.cingolani@poliba.it} \\ Simone Secchi\permanent{Dipartimento di matematica e applicazioni, Universit\`a di Milano Bicocca. Via R. Cozzi 53, 20125 Milano (Italy)}\email{simone.secchi@unimib.it}}
\maketitle

\begin{abstract}

We prove existence of positive ground state solutions to the pseudo-relativistic Schr\"{o}dinger equation
\begin{equation*}
\left\{
\begin{array}{l}
\sqrt{-\Delta +m^2} u +Vu = \left( W *
|u|^{\theta} \right)|u|^{\theta -2} u \quad\text{in $\mathbb{R}^N$}\\
u \in H^{1/2}(\mathbb{R}^N)
\end{array}
\right.
\end{equation*}
where $N \geq 3$,  $m >0$, $V$ is a bounded external scalar potential and $W$ is a convolution potential, radially symmetric, satisfying suitable assumptions. We also furnish some asymptotic decay estimates of the found solutions.
\end{abstract}

\section{Introduction}
The mean field limit of a quantum system describing many self-gravitating, relativistic bosons  with rest mass $m>0$
leads to the time-dependent pseudo-relativistic Hartree equation
\begin{equation}\label{eq:1.1}
i\frac{\partial \psi}{\partial t} = \left(\sqrt{-\Delta +m^2} -m \right)\psi -  \left( \frac{1}{|x|} * |\psi|^2 \right)  \psi, \quad x \in \mathbb{R}^3
\end{equation}
where  $\psi : \mathbb{R} \times \mathbb{R}^3 \to \mathbb{C}$ is the wave field. Such a physical system is often referred to as a boson star in astrophysics (see \cite{es,fl,fjl}).
Solitary wave solutions $\psi(t,x)= e^{-i t \lambda} \phi$, $\lambda \in \mathbb{R}$ to equation (\ref{eq:1.1}) satisfy the equation
\begin{equation}\label{psedorelativistichartree}
\left( \sqrt{-\Delta +m^2} -m \right)\phi - \left( \frac{1}{|x|} * |\phi|^2 \right)  \phi
 =\lambda \phi
\end{equation}

For the non-relativistic Hartree equation, existence and uniqueness (modulo translations) of a minimizer has been prove by Lieb  \cite{lieb} by using symmetric decreasing rearrangement inequalities.
Within the same setting, always for the negative Laplacian, Lions \cite{l2} proved existence of infinitely
many spherically symmetric solutions by application of abstract critical point theory both
without the constraint  and with the constraint for a more general radially symmetric convolution potential.
The  non-relativistic Hartree equations is also known as the Choquard-Pekard or Schr\"odinger-Newton equation and recently a large amount of papers are devoted to the study of solitary states and its semiclassical limit
(see \cite{a,cho,ccs,ccs1,css,fl,ls,mpt,mz,pe2,pe3,s,t,ww} and references therein).

In \cite{ly} Lieb and Yau  solved the pseudo-relativistic Hartree  equation $(\ref{psedorelativistichartree})$ by minimization on the sphere $\{\phi \in L^2(\mathbb{R}^3) \mid \int_{\mathbb{R}^3} |\phi|^2 =M \}$, and they proved that a radially symmetric ground state exists in $H^{1/2}(\mathbb{R}^3)$ whenever $M<M_c$, the so-called Chandrasekhar mass. These results have been generalized in \cite{CNbis}. Later Lenzmann proved in \cite{Lenz} that this ground state is unique (up to translations and phase change) provided that the mass $M$ is sufficiently small; some results about the non-degeneracy of the ground state solution are also given.

Quite recently, Coti Zelati and Nolasco (see \cite{CN}) studied the equation
\[
\sqrt{-\Delta +m^2}u=\mu u +\nu |u|^{p-2}u+\sigma (W*u^2)u
\]
under the assumptions that $p \in \left(2,\frac{2N}{N-1} \right)$, $N \geq 3$, $\mu <m$, $m >0$, $\nu \geq 0$, $\sigma \geq 0$ but not both zero, $W \in L^r(\mathbb{R}^N)+L^\infty(\mathbb{R}^N)$, $W\geq 0$, $r > N/2$, $W$ is radially symmetric and decays to zero at infinity. They proved the existence of a positive, radial solution that decays to zero at infinity exponentially fast. For the case $\sigma < 0$, $\mu < m $,  we also refer to \cite{Mugnai} where a more general nonlinear term is considered.

In the present work we consider a generalized pseudo-relativistic Hartree equation
\begin{equation}\label{eq:1}
\sqrt{-\Delta +m^2} u +V u = \left( W *
|u|^{\theta} \right)|u|^{\theta -2} u \quad\text{in $\mathbb{R}^N$}
\end{equation}
where $N \geq 3$, $m >0$, $V$ is an external potential and $W \geq 0$  is a radially symmetric convolution kernel such that
$\lim_{|x| \to +\infty} W(|x|) = 0$.

In \cite{MeZo} Melgaard and Zongo prove that $(\ref{eq:1})$ has  a sequence of radially symmetric solutions of higher and higher energy, assuming that $V$ is radially symmetric potential, $\theta=2$ and under some restrictive assumptions on the structure of the kernel $W$.

Here we are interested to look for positive ground state solutions for the pseudo-relativistic Hartree equation  $(\ref{eq:1})$
when $V$ is not symmetric. In such a case, the nonlocality of $\sqrt{-\Delta +m^2}$ and the presence of the external potential $V$ (not symmetric) complicate the analysis of the pseudo-relativistic Hartree equation in a substantial way. The main difficulty is, as usual, the lack of compactness.

\medskip
In what follows, we will keep the following assumptions:

\begin{itemize}
\item[(V1)] $V\colon \mathbb{R}^N \to \mathbb{R}$ is a continuous and bounded function, and $V(y)+V_0 \geq 0$ for every $y \in \mathbb{R}^N$ and for some $V_0\in (0,m)$.
\item[(V2)] There exist $R>0$ and $k \in (0,2m)$ such that
\begin{equation}\label{eq:2} V(x) \leq V_\infty - e^{-k|x|}
\quad\text{for all $|x| \geq R$}
\end{equation}
where $V_\infty = \liminf_{|x| \to +\infty} V(x)>0$.
\item[(W)]
$W \in L^r(\mathbb{R}^N)+L^\infty
(\mathbb{R}^N)$ for some $r> \max \left\{ 1, \frac{N}{N(2-\theta) + \theta} \right\}$ and $2 \leq \theta <\frac{2N}{N-1}$.
\end{itemize}

Our main result is the following.
\begin{theorem} \label{th:main}
Retain assumptions (V1), (V2) and
(W). Then equation (\ref{eq:1}) has at
least a positive solution $u \in H^{1/2}(\mathbb{R}^N)$.
\end{theorem}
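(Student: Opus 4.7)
The plan is to realize the nonlocal operator $\sqrt{-\Delta+m^2}$ as a Dirichlet-to-Neumann map of a local degenerate elliptic problem on the half-space $\mathbb{R}^{N+1}_+$ (the ``extension'' approach used in \cite{CN}). Writing $u=U(\cdot,0)$ for the trace of $U\in\h$, equation \eqref{eq:1} becomes the Euler--Lagrange equation of
\[
I(U)=\frac12\int_{\mathbb{R}^{N+1}_+}\!\bigl(|\nabla U|^2+m^2U^2\bigr)\,dx\,dy+\frac12\int_{\mathbb{R}^N}V(x)u^2\,dx-\frac{1}{2\theta}\int_{\mathbb{R}^N}\!\bigl(W*|u|^\theta\bigr)|u|^\theta\,dx,
\]
which is well-defined and of class $C^1$ on $\h$ under (V1), (W): the Hardy--Littlewood--Sobolev inequality together with the fractional Sobolev trace embedding $\h\hookrightarrow L^q(\mathbb{R}^N)$ for $q\in[2,\tfrac{2N}{N-1}]$ shows that $\theta<\tfrac{2N}{N-1}$ is exactly the subcriticality condition needed. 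Condition (V1) ($V+V_0\geq0$ with $V_0<m$) makes the quadratic part equivalent to the $\h$ norm, and $2\theta>2$ gives the standard mountain pass geometry, so one may define a mountain pass level $c>0$.

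Next I would introduce the problem at infinity, obtained by replacing $V$ by the constant $V_\infty$. Its functional $I_\infty$ is translation invariant and, by a standard argument based on radial rearrangements and concentration-compactness (or by \cite{CN}), attains a positive radial ground state $U_\infty\in\h$ at its mountain pass level $c_\infty$; moreover $U_\infty$ and its trace decay exponentially at rate $m$ at infinity. The central step, and the main obstacle, is to prove the strict inequality $c<c_\infty$. Using $U_\infty(\cdot-x_0,\cdot)$ as a test function with $|x_0|$ large, (V2) and the decay $u_\infty(x)\lesssim e^{-m|x|}$ yield an estimate of the form
\[
\sup_{t\geq0}I\bigl(tU_\infty(\cdot-x_0,\cdot)\bigr)\leq c_\infty-c_1 e^{-k|x_0|}+o\bigl(e^{-k|x_0|}\bigr),
\]
where the crucial point is that $k<2m$ ensures the exponential loss coming from the perturbation of $V$ dominates all higher order remainders produced by translating the ground state.

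With $c<c_\infty$ in hand, I would apply the mountain pass theorem to obtain a Palais--Smale sequence $(U_n)$ at level $c$. A Lions-type concentration-compactness analysis, applied to the measures $|u_n|^\theta$ on $\mathbb{R}^N$, rules out vanishing (since $c>0$ forces the nonlinear term to stay positive along the sequence) and rules out dichotomy (since a splitting would produce an energy $\geq c_\infty>c$ by invariance of $I_\infty$ under translations). Hence, up to translations $x_n\in\mathbb{R}^N$, $U_n(\cdot-x_n,\cdot)$ converges weakly and in fact strongly in $\h$ to a nontrivial critical point $U$. If the translations escape to infinity one would recover a solution of the problem at infinity of energy $c<c_\infty$, a contradiction; so $(x_n)$ is bounded and $U$ solves the original equation.

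Finally, positivity of the trace $u=U(\cdot,0)$ follows because $I(|U|)\leq I(U)$ allows us to assume $U\geq0$, and then the strong maximum principle for the extension problem, together with $V+V_0\geq 0$ and the nonnegativity of the Choquard term, gives $u>0$ on $\mathbb{R}^N$. The delicate point throughout is the absence of symmetry of $V$: it forces the restoration of compactness via translations rather than by working in radial subspaces, and hence the sharp energy comparison $c<c_\infty$ (which in turn relies crucially on the exponential estimate \eqref{eq:2} with $k<2m$) is the real heart of the argument.
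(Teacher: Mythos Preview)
Your proposal is correct and follows essentially the same strategy as the paper: extension to $\mathbb{R}^{N+1}_+$, mountain-pass geometry for $I$, the strict inequality $c<E_{V_\infty}$ obtained by testing with translates of the ground state $\omega_\infty$ of the limiting problem and exploiting (V2) with $k<2m$ against the $e^{-m|y|}$ decay, recovery of compactness below the threshold $E_{V_\infty}$, and positivity via $I(|U|)\leq I(U)$. The only cosmetic differences are that the paper uses a truncated translate $\xi_{R_s}\omega_\infty(\cdot-s)$ rather than the raw translate, and establishes the Palais--Smale condition below $E_{V_\infty}$ directly through a splitting lemma (after Ackermann) and a Nehari-type comparison in the radial space $H^\sharp$, rather than via the concentration-compactness dichotomy you outline.
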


We remark that Theorem \ref{th:main} applies for a large class of bounded electric potentials without symmetric constraints and  covers the physically relevant cases of Newton or Yukawa type two body interaction, i.e. $W(x)= \frac{1}{|x|^\lambda}$ with $0 < \lambda < 2$, $W(x)= \frac{e^{-|x|}}{|x|}$.

The theorem will be proved using variational methods. Firstly we transform the problem into an elliptic equation with  nonlinear Neumann boundary conditions, using a {\sl local} realization of the pseudo-differential operator $\sqrt{- \Delta + m^2}$ as in \cite{CSM, CT, CN, CNbis}.
 The corresponding solutions are found as critical points of an Euler functional defined in $H^1(\mathbb{R}_+^{N+1})$. We show that such a functional satisfies the Palais-Smale condition below some energy level determined by the value of $V$ at infinity and we prove the existence of a Mountain Pass solution under the level where the Palais-Smale condition holds.  Finally  we also furnish some asymptotic decay estimates of the found solution.

\smallskip
Local and global well-posedness results for pseudo-relativistic Hartree equations with external potential are proved by Lenzmann in \cite{Lenz2}.

\section{The variational framework}

Before we state our main result, we recall a few basic facts about the functional setting of our problem.
The operator $\sqrt{-\Delta +m^2}$ can be defined by Fourier analysis: given any $\phi \in L^2(\mathbb{R}^3)$ such that
\[
\int_{\mathbb{R}^3} \left(m^2 + |k|^2 \right) |\mathcal{F}\phi|^2 \, dk < +\infty,
\]
we define $\sqrt{-\Delta + m^2} \phi$ via the identity
\[
\mathcal{F} \left(\sqrt{-\Delta + m^2} \phi\right) = \sqrt{m^2+|k|^2} \mathcal{F}\phi,
\]
$\mathcal{F}$ being the usual Fourier transform. The condition
\[
\int_{\mathbb{R}^3}\sqrt{m^2+|k|^2} |\mathcal{F}\phi|^2 \, dk < +\infty
\]
is known to be equivalent to $\phi \in H^{1/2}(\mathbb{R}^3)$. In this sense, the fractional Sobolev
space $H^{1/2}(\mathbb{R}^3)$ is the natural space to work in. However, this definition is not particularly
convenient for variational methods, and we prefer a \emph{local} realization of the operator in the augmented half-space, originally inspired by the paper~\cite{CS} for the fractional laplacian.

Given $u \in \mathcal{S}(\mathbb{R}^N)$,
the Schwarz space of rapidly decaying smooth functions defined on
$\mathbb{R}^N$, there exists one and only one function $v \in
\mathcal{S}(\mathbb{R}^{N+1}_{+})$ (where $\mathbb{R}^{N+1}_{+}
=(0,+\infty) \times \mathbb{R}^N$) such that
\begin{equation*}
\begin{cases} -\Delta v + m^2 v =0 &\text{in $\mathbb{R}^{N+1}_{+}$}\\
v(0,y)=u(y) &\text{for $y \in \mathbb{R}^N = \partial
\mathbb{R}^{N+1}_{+}$}.
\end{cases}
\end{equation*} Setting
\[
Tu(y)=-\frac{\partial v}{\partial x}(0,y),
\]
we easily see that the problem
\begin{equation*}
\begin{cases} -\Delta w + m^2 w =0 &\text{in $\mathbb{R}^{N+1}_{+}$}\\
w(0,y)=Tu(y) &\text{for $y \in \partial \mathbb{R}^{N+1}_{+} = \{ 0\}
\times \mathbb{R}^N \simeq \mathbb{R}^N$}
\end{cases}
\end{equation*}
is solved by $w(x,y)=-\frac{\partial v}{\partial
x}(x,y)$. From this we deduce that
\[
T(Tu)(y)=-\frac{\partial w}{\partial x}(0,y) = \frac{\partial^2
v}{\partial x^2}(0,y) = \left(-\Delta_y v + m^2 v \right)(0,y),
\]
and hence $T\circ T = (-\Delta_y +m^2)$, namely $T$ is a square
root of the Schr\"{o}dinger operator $-\Delta + m^2 $ on
$\mathbb{R}^{N}=\partial \mathbb{R}_{+}^{N+1}$.

\vspace{10pt}

In the sequel, we will write
$|\cdot|_p$ for the norm in $L^p(\mathbb{R}^N)$ and $\|\cdot\|_p$ for
the norm in $L^p(\mathbb{R}^{N+1}_{+})$. The symbol $\| \cdot \|$ will
be reserved for the usual norm of $H^1(\mathbb{R}_{+}^{N+1})$.

The theory of traces for Sobolev spaces ensures
that every function $v \in H^1(\mathbb{R}^{N+1}_{+})$
possesses a \emph{trace} $\gamma(v) \in H^{1/2}(\mathbb{R}^N)$ which
satisfies the inequality (see \cite[Lemma 13.1]{Tartar})
\begin{equation}\label{eq:5} |\gamma(v)|_p^p \leq p
\|v\|_{2(p-1)}^{p-1} \left\| \frac{\partial v}{\partial x}
\right\|_{2}
\end{equation}
whenever $2 \leq p \leq 2N/(N-1)$.
This also implies that, for every $\lambda >0$,
\begin{equation} \label{eq:19}
\int_{\mathbb{R}^N} \gamma(v)^2 \leq \lambda \int_{\mathbb{R}_{+}^{N+1}}|v|^2 + \frac{1}{\lambda} \int_{\mathbb{R}_{+}^{N+1}} \left| \frac{\partial v}{\partial x} \right|^2.
\end{equation}
As a particular case, we record
\begin{equation} \label{eq:21}
\int_{\mathbb{R}^N} \gamma(v)^2 \leq m \int_{\mathbb{R}_{+}^{N+1}}|v|^2 + \frac{1}{m} \int_{\mathbb{R}_{+}^{N+1}} \left| \nabla v \right|^2.
\end{equation}
It is also known (see \cite[Lemma 16.1]{Tartar})
that any element of $H^{1/2}(\mathbb{R}^N)$ is the trace of some
function in $H^1(\mathbb{R}_{+}^{N+1})$.

\vspace{10pt}
From the previous construction and following \cite{CSM, CT, CN, CNbis}, we can replace the \emph{nonlocal} problem
(\ref{eq:1}) with the local Neumann problem
\begin{equation}\label{eq:3}
\begin{cases} -\Delta v + m^2 v =0 &\text{in $\mathbb{R}^{N+1}_{+}$}\\
-\frac{\partial v}{\partial x} = -V(y)v +
(W*|v|^{\theta})|v|^{\theta-2}v &\text{in $\mathbb{R}^N = \partial
\mathbb{R}^{N+1}_{+}$}.
\end{cases}
\end{equation}
%
%
We will look for solutions to (\ref{eq:3}) as critical points of the
Euler functional \newline $I \colon H^1(\mathbb{R}_{+}^{N+1}) \to \mathbb{R}$ defined by
\begin{multline}\label{eq:4}
I(v) = \frac{1}{2}
\iint_{\mathbb{R}^{N+1}_{+}} \left(|\nabla v|^2 + m^2 v^2 \right) dx
\, dy + \frac{1}{2}\int_{\mathbb{R}^N} V(y)\gamma(v)^2\, dy \\
{}-
\frac{1}{2 \theta} \int_{\mathbb{R}^N} (W * |\gamma(v)|^{\theta})
|\gamma(v)|^{\theta}\, dy.
\end{multline}
We recall the well-known Young's inequality.

\begin{proposition}
Assume   $f \in L^p(\mathbb{R}^N)$, $g \in L^q(\mathbb{R}^N)$ with $1 \leq p$, $q, r \leq \infty$, $p^{-1}+q^{-1}=1 + r^{-1}$.
Then
\begin{equation} \label{eq:hlss}
| f * g|_r  \leq  |f|_p |g|_q.
\end{equation}
\end{proposition}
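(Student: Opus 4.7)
The statement is the classical Young's convolution inequality, so the plan is to adopt its standard proof: treat the endpoint $r=\infty$ separately and handle the finite-$r$ case by a three-factor H\"older decomposition.

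When $r=\infty$, the hypothesis $p^{-1}+q^{-1}=1+r^{-1}$ collapses to $1/p+1/q=1$, so $q$ is the H\"older conjugate of $p$. The pointwise bound
\[
|(f*g)(x)|\le\int_{\mathbb{R}^N}|f(x-y)||g(y)|\,dy
\]
combined with H\"older's inequality then gives $|(f*g)(x)|\le|f|_p|g|_q$ for every $x$, and taking the supremum proves $(\ref{eq:hlss})$ in this case.

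For $1\le r<\infty$, the decisive step is the decomposition
\[
|f(x-y)g(y)|=\bigl(|f(x-y)|^p|g(y)|^q\bigr)^{1/r}|f(x-y)|^{1-p/r}|g(y)|^{1-q/r},
\]
to which I would apply the three-term H\"older inequality with exponents $r$, $pr/(r-p)$ and $qr/(r-q)$, read as $+\infty$ in the degenerate cases $p=r$ or $q=r$. The algebraic identity
\[
\frac{1}{r}+\frac{r-p}{pr}+\frac{r-q}{qr}=\frac{1}{p}+\frac{1}{q}-\frac{1}{r}=1
\]
is precisely the content of the hypothesis and makes the step legitimate. This yields the pointwise estimate
\[
|(f*g)(x)|^r\le|f|_p^{\,r-p}|g|_q^{\,r-q}\int_{\mathbb{R}^N}|f(x-y)|^p|g(y)|^q\,dy.
\]

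Integrating in $x$ and applying Fubini, the double integral evaluates via translation invariance of Lebesgue measure to $|f|_p^p|g|_q^q$, and we conclude $|f*g|_r^r\le|f|_p^r|g|_q^r$; extracting the $r$-th root gives $(\ref{eq:hlss})$. The only non-trivial point in the whole proof is the bookkeeping that selects the three H\"older exponents and the handling of the boundary cases where one of them degenerates to $1$ or $+\infty$ (including $p=1$ or $q=1$, where the three-factor argument collapses to the classical Minkowski integral inequality); once the splitting above is written down, everything else reduces to Fubini and a change of variables.
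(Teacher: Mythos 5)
Your argument is correct: it is the standard proof of Young's convolution inequality via the splitting $|f(x-y)g(y)|=\bigl(|f(x-y)|^p|g(y)|^q\bigr)^{1/r}|f(x-y)|^{1-p/r}|g(y)|^{1-q/r}$, three-term H\"older, and Tonelli--Fubini with translation invariance, and your handling of the endpoint $r=\infty$ and of the degenerate exponents is fine (the only implicit point worth stating is that the hypothesis $p^{-1}+q^{-1}=1+r^{-1}$ forces $p\le r$ and $q\le r$, so the auxiliary exponents $pr/(r-p)$ and $qr/(r-q)$ are indeed admissible). There is nothing in the paper to compare against: the proposition is simply recalled there as the classical Young inequality and no proof is given, so your write-up supplies the standard argument that the authors take for granted.
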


A major tool for our analysis is the generalized Hardy-Littlewood-Sobolev inequality. We recall that $L_w^q(\mathbb{R}^N)$ is the \emph{weak} $L^q$ space: see \cite{Lieb1983} for a definition. We denote by $| \cdot|_{q,w}$ the usual norm in $L_w^q(\mathbb{R}^N)$.
\begin{proposition}[\cite{Lieb1983}]
Assume that $p$, $q$ and $t$ lie in $(1,+\infty)$ and $p^{-1}+q^{-1}+t^{-1}=2$. Then, for some constant $N_{p,q,t}>0$ and for any $f \in L^p(\mathbb{R}^N)$, $g \in L^t(\mathbb{R}^N)$ and $h \in L_w^q(\mathbb{R}^N)$, we have the inequality
\begin{equation} \label{eq:hls}
\left| \int f(x)h(x-y)g(y)\, dx\, dy
\right| \leq N_{p,q,t} |f|_p |g|_t |h|_{q,w}.
\end{equation}
\end{proposition}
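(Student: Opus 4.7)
My plan is to reduce the trilinear inequality to the convolution mapping property that $T_h g := h \ast g$ sends $L^t(\mathbb{R}^N)$ boundedly into $L^{p'}(\mathbb{R}^N)$, where $1/p' = 1/q + 1/t - 1$, and to obtain this mapping property by the classical two-step procedure: first a weak-type endpoint bound via layer-cake truncation, then Marcinkiewicz interpolation to promote it to a strong bound. By homogeneity I normalize $|h|_{q,w} = 1$. The scaling relation $p^{-1}+q^{-1}+t^{-1}=2$ rewrites as the Young relation $1/q + 1/t = 1 + 1/p'$, so the point of the proof is to compensate for the fact that $h$ lies only in \emph{weak} $L^q$ rather than $L^q$ itself.

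\textbf{Weak-type bound.} Fix $g \in L^t$ with $|g|_t = 1$ and $\alpha > 0$, and for a parameter $\beta > 0$ to be chosen, split $h = h_\beta + h^\beta$ where $h^\beta = h\,\mathbf{1}_{\{|h| > \beta\}}$ and $h_\beta = h - h^\beta$. The layer-cake identity combined with the weak $L^q$ control $|\{|h| > s\}| \leq s^{-q}$ gives at once
\[
|h_\beta|_{t'}^{t'} \leq \tfrac{t'}{t'-q}\,\beta^{t'-q}, \qquad |h^\beta|_1 \leq \tfrac{q}{q-1}\,\beta^{1-q},
\]
the first estimate requiring $t' > q$ (equivalently $p' < \infty$) and the second requiring $q > 1$. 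Young's inequality (\ref{eq:hlss}) applied with exponents $(t', t, \infty)$ then gives $|h_\beta \ast g|_\infty \leq C \beta^{1 - q/t'}$; choosing $\beta \sim \alpha^{t'/(t'-q)}$ forces this to be at most $\alpha/2$, so $\{|h \ast g| > \alpha\} \subseteq \{|h^\beta \ast g| > \alpha/2\}$, and Chebyshev combined with Young's inequality (\ref{eq:hlss}) applied with $(1, t, t)$ yields
\[
\bigl|\{|h \ast g| > \alpha\}\bigr| \leq (2/\alpha)^{t}\,|h^\beta \ast g|_t^{t} \leq C\,\alpha^{-t} \beta^{t(1-q)}.
\]
A short computation using the scaling relation collapses the resulting exponent of $\alpha$ on the right-hand side to exactly $-p'$; this is the desired weak-type $(t, p')$ estimate for $T_h$.

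\textbf{Interpolation and duality.} The scaling constraint permits $t$ to vary in an open interval while $p, q, t$ all remain in $(1, \infty)$, so the weak-type bound is available at two distinct exponent pairs straddling the prescribed value. Marcinkiewicz interpolation upgrades these endpoint weak bounds to the strong estimate $|h \ast g|_{p'} \leq C_{p,q,t} |g|_t$. H\"older's inequality with conjugate exponents $p$ and $p'$ then gives
\[
\left| \int f(x) (h \ast g)(x)\,dx \right| \leq |f|_p\,|h \ast g|_{p'} \leq C_{p,q,t}\,|f|_p\,|g|_t,
\]
and restoring the homogeneity factor $|h|_{q,w}$ removed by normalization yields the asserted inequality with $N_{p,q,t} := C_{p,q,t}$.

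\textbf{Main obstacle.} The delicate point is the bookkeeping in the weak-type step: the truncation level $\beta \sim \alpha^{t'/(t'-q)}$ must make the two competing exponents in $\alpha^{-t} \beta^{t(1-q)}$ conspire to produce exactly $\alpha^{-p'}$, and this is precisely what the Sobolev scaling $p^{-1}+q^{-1}+t^{-1}=2$ is engineered to guarantee. The strict inequalities $p, q, t > 1$ are indispensable: $q = 1$ would kill the $|h^\beta|_1$ estimate (in which case the conclusion already follows from Young alone), while the Lebesgue endpoints $1$ or $\infty$ for $p$ or $t$ are genuinely excluded because the conclusion itself fails there.
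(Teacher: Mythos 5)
Your argument is correct, and it necessarily differs from the paper, because the paper offers no proof of this proposition at all: it is quoted from Lieb's 1983 article, where the emphasis is on sharp constants (for kernels $|x|^{-\lambda}$) obtained by rearrangement methods. What you give is the standard ``weak Young'' proof: truncate $h$ at a level $\beta(\alpha)$, use the layer-cake consequences of $|\{|h|>s\}|\leq s^{-q}$, namely $|h_\beta|_{t'}^{t'}\leq \frac{t'}{t'-q}\beta^{t'-q}$ (valid since $t'>q$, which is exactly $p'<\infty$) and $|h^\beta|_1\leq \frac{q}{q-1}\beta^{1-q}$ (valid since $q>1$), and combine with the two instances of Young's inequality to get the weak-type $(t,p')$ bound; your exponent bookkeeping does close up, since $-t-\frac{t'}{t'-q}\,t(q-1)=-\frac{qt'}{t'-q}=-p'$ by the scaling relation. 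The Marcinkiewicz step is legitimate: $T_h$ is linear, $p'>t$ because $1/p'=1/q+1/t-1<1/t$, the two auxiliary exponent pairs can be chosen on the same line $1/p'=1/q+1/t-1$ straddling the prescribed pair because all exponents are interior, and interpolation along that line lands exactly on $(t,p')$; Hölder in $f$ and homogeneity in $h$ finish the proof. Compared with the cited rearrangement proof, yours is elementary and valid for an arbitrary $h\in L^q_w$, at the price of a non-sharp constant $N_{p,q,t}$ — which is all the paper ever needs, e.g.\ in (\ref{eq:15}) and (\ref{difD}). One cosmetic point: to justify Fubini and write the double integral as $\int f\,(h*g)$, first run your estimate with $|f|,|g|,|h|$ (the argument applies verbatim), which gives absolute convergence and then the stated inequality.
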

For the sake of completeness, we check that the functional $I$ is well-defined.
As a consequence of (\ref{eq:5}), for every $p \in \left[2,
\frac{2N}{N-1}\right]$ we deduce that
\begin{equation}\label{eq:6} |\gamma(v)|_p \leq \frac{p-1}{p}
\|v\|_{2(p-1)} + \|\nabla v \|_2 \leq C_p \|v\|,
\end{equation} and the term $\int_{\mathbb{R}^N} V(y) \gamma(v)^2\,
dy$ in the expression of $I(v)$ is finite because of the boundedness
of $V$.
Writing $W=W_1+W_2 \in L^r(\mathbb{R}^N) + L^\infty(\mathbb{R}^N)$  and using $(\ref{eq:hls})$ we
can estimate the convolution term as follows:
\begin{multline} \label{eq:15}
\int_{\mathbb{R}^N} (W *
|\gamma(v)|^\theta)|\gamma(v)|^\theta = \int_{\mathbb{R}^N} (W_1 *
|\gamma(v)|^\theta)|\gamma(v)|^\theta + \int_{\mathbb{R}^N} (W_2 *
|\gamma(v)|^\theta)|\gamma(v)|^\theta \\ \leq |W_1|_{r}
|\gamma(v)|_{\frac{2r\theta}{2r-1}}^{2\theta} + |W_2|_\infty
|\gamma(v)|_{\theta}^{2\theta} \leq |W_1|_{r} \|v\|^{2\theta} +
|W_2|_\infty \|v\|^{2\theta}.
\end{multline}
Since
\[
r > \frac{N}{N(2-\theta) + \theta} \quad\text{and}\quad 2 \leq \theta < \frac{2N}{N-1},
\]
there results
\[
\frac{2 r -1 }{2 \theta r} = \frac{1}{\theta} -  \frac{1}{2\theta r} > \frac{1}{\theta}-\frac{N(2-\theta)+\theta}{2\theta N} = \frac{N-1}{2N},
\]
and thus
\[
 \frac{2\theta r}{2r-1} < \frac{2N}{N-1},
\]
and from (\ref{eq:15}) we see that the convolution term in $I$ is finite. It is easy to check, by the same token, that
$I \in C^1(H^1(\mathbb{R}_{+}^{N+1}))$.
\begin{remark}
 As we have just seen, estimates involving the kernel $W$ always split into two parts. As a rule, those with the \emph{bounded}
 kernel $W_2 \in L^\infty (\mathbb{R}^N)$ are straightforward. In the sequel, we will often focus on the contribution of
 $W_1 \in L^r (\mathbb{R}^N)$ and drop the easy computation with $W_2$.
\end{remark}

\section{The limit problem}
Let us consider the space of the symmetric functions
\[ H^{\sharp} = \{ u \in \h \mid \text{$u(x, Ry)= u(x,y)$ for all $R \in
O(N)$} \}.
\]
Let us consider the functional $J_\alpha \colon H^\sharp \to
\mathbb{R}$ defined by setting
\begin{multline}\label{eq:12}
J_\alpha (v) = \frac{1}{2}
\iint_{\mathbb{R}^{N+1}_{+}} \left(|\nabla v|^2 + m^2 v^2 \right) dx
\, dy + \frac{1}{2}\int_{\mathbb{R}^N} \alpha \gamma(v)^2\, dy \\
{}-
\frac{1}{2 \theta} \int_{\mathbb{R}^N} (W * |\gamma(v)|^{\theta})
|\gamma(v)|^{\theta}\, dy.
\end{multline}
where $W \geq  0$ is radially symmetric, $\lim_{|x| \to +\infty} W(|x|)=0$ and assumption $(W)$ holds.
If $\alpha > -m$,  we can extend the arguments in Theorem 4.3 \cite{CN}, for the case $\theta=2$, and prove that the functional
$J_\alpha$ has a Mountain Pass critical point $v_\alpha \in H^\sharp$,
namely
\begin{equation}\label{Mountainpass} J_\alpha (v_\alpha)= E_\alpha =
\inf_{g \in \Gamma_\sharp} \max_{t \in [0,1]} J_\alpha (g(t))
\end{equation} where $\Gamma_\sharp =\{ g \in C([0,1]; H^\sharp) \mid g(0)
=0,\ J_\alpha(g(1)) <0 \}$.
The critical point $v_\alpha$ corresponds to a weak solution of
\begin{equation}\label{5}
  \begin{cases} -\Delta v + m^2 v =0 &\text{in
$\mathbb{R}^{N+1}_{+}$}\\ -\frac{\partial v}{\partial x} = - \alpha v
+ ( W*|v|^{\theta})|v|^{\theta-2}v &\text{in $\mathbb{R}^N = \partial
\mathbb{R}^{N+1}_{+}$}.
  \end{cases}
\end{equation}

In the sequel, we need a standard characterization of the
mountain-pass level $E_\alpha$. Let us define the \emph{Nehari
manifold} $\mathcal{N}_\alpha$ associated to the functional $J_\alpha$:
\begin{multline} \label{eq:9}
\mathcal{N}_\alpha = \Big\{ v \in
H^{\sharp} \mid \iint_{\mathbb{R}^N \times \mathbb{R}^N} |\nabla v|^2 +
m^2 v^2 \, dx \, dy \\
= -\alpha \int_{\mathbb{R}^N} \gamma(v)^2 \, dy +
\int_{\mathbb{R}^N} \left(W*|\gamma(v)|^\theta \right)|\gamma(v)|^{\theta} \, dy
\Big\}.
\end{multline}
\begin{lemma} There results
\begin{equation}\label{eq:10} \inf_{v \in \mathcal{N}_\alpha} J_\alpha
(v) = \inf_{v \in H^{\sharp}} \max_{t >0} J_\alpha (tv) = E_\alpha.
\end{equation}
\end{lemma}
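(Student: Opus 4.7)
The structure $J_\alpha(v) = \frac{1}{2}Q(v) - \frac{1}{2\theta}\mathcal{B}(v)$, with
\[
Q(v) := \iint_{\mathbb{R}^{N+1}_{+}}(|\nabla v|^2 + m^2 v^2)\,dx\,dy + \alpha \int_{\mathbb{R}^N}\gamma(v)^2\,dy
\]
quadratic and $\mathcal{B}(v) := \int_{\mathbb{R}^N}(W*|\gamma(v)|^\theta)|\gamma(v)|^\theta\,dy$ nonnegative and positively $2\theta$-homogeneous (with $2\theta\ge 4$), is exactly what makes the Nehari identity work, and the plan is to run the standard fibering argument. Since $\alpha > -m$, the trace inequality (\ref{eq:21}) yields coercivity $Q(v) \ge c\|v\|^2$ for some $c>0$, while (\ref{eq:15}) gives $\mathcal{B}(v) \le C\|v\|^{2\theta}$; this strict separation of powers produces both the mountain-pass geometry at the origin and the unimodal behavior of $J_\alpha$ along rays.

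First I would analyze, for each $v \in H^\sharp$ with $\mathcal{B}(v) > 0$, the scalar function $t \mapsto J_\alpha(tv) = \frac{t^2}{2}Q(v) - \frac{t^{2\theta}}{2\theta}\mathcal{B}(v)$. A direct differentiation shows it admits a unique positive critical point, necessarily a maximum, at $t_v := (Q(v)/\mathcal{B}(v))^{1/(2\theta-2)}$, and by construction $t_v v \in \mathcal{N}_\alpha$; for $w \in \mathcal{N}_\alpha$ one has $t_w = 1$ and $\max_{t>0} J_\alpha(tw) = J_\alpha(w)$. Combining these observations gives the first equality
\[
\inf_{v \in H^\sharp}\max_{t > 0} J_\alpha(tv) = \inf_{w \in \mathcal{N}_\alpha} J_\alpha(w),
\]
rays with $\mathcal{B}(v) = 0$ being irrelevant once we know this common value is strictly positive (which follows from the mountain-pass geometry).

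To identify this value with $E_\alpha$ I would argue the two inequalities separately. For $E_\alpha \le \inf_{\mathcal{N}_\alpha} J_\alpha$: given $w \in \mathcal{N}_\alpha$, pick $T > 1$ so large that $J_\alpha(Tw) < 0$ (possible because $J_\alpha(tw) \to -\infty$ as $t \to \infty$) and use the path $g(s) := sTw$ for $s \in [0,1]$; then $g \in \Gamma_\sharp$ and the ray analysis forces $\max_{s} J_\alpha(g(s)) = J_\alpha(w)$. The reverse inequality is the main obstacle: I would prove it by showing every $g \in \Gamma_\sharp$ must cross $\mathcal{N}_\alpha$. For this I would examine the continuous map $\Phi(s) := Q(g(s)) - \mathcal{B}(g(s))$: whenever $g(s) \ne 0$ is small one has $\mathcal{B}(g(s)) = O(\|g(s)\|^{2\theta}) = o(Q(g(s)))$, so $\Phi(s) > 0$; and $\Phi(1) < 0$ because $J_\alpha(g(1)) < 0$ combined with $\theta \ge 2$ yields $Q(g(1)) < \tfrac{1}{\theta}\mathcal{B}(g(1)) \le \mathcal{B}(g(1))$. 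Setting $s^* := \inf\{s \in [0,1] : \Phi(s) < 0\}$ gives by continuity $\Phi(s^*) = 0$, and the same asymptotic comparison near the origin rules out $g(s^*) = 0$ (otherwise $\mathcal{B}(g(s)) > Q(g(s))$ for $s$ just past $s^*$ while $g(s) \to 0$, a contradiction). Hence $g(s^*) \in \mathcal{N}_\alpha$ and $\max_s J_\alpha(g(s)) \ge J_\alpha(g(s^*)) \ge \inf_{\mathcal{N}_\alpha} J_\alpha$. Chaining the three estimates yields (\ref{eq:10}).
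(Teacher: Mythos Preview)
Your proof is correct and follows essentially the same route as the paper's: a fibering analysis along rays to identify $\inf_{\mathcal{N}_\alpha} J_\alpha$ with $\inf_{v}\max_{t>0}J_\alpha(tv)$, followed by the observation that every admissible path must cross $\mathcal{N}_\alpha$. The only cosmetic difference is that the paper phrases the crossing argument topologically ($\mathcal{N}_\alpha$ separates $H^\sharp$ into two components and $J_\alpha\ge 0$ on the one containing the origin), whereas you run an explicit intermediate-value argument on $\Phi(s)=Q(g(s))-\mathcal{B}(g(s))$; both encode the same idea and rely on the same estimates $Q(v)\gtrsim\|v\|^2$ and $\mathcal{B}(v)\lesssim\|v\|^{2\theta}$.
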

\begin{proof}
The proof is straightforward, since $J_\alpha$ is the sum of
homogeneous terms; we follow \cite{W}.
First of all, for $v \in H^\sharp$ we compute
\begin{multline} \label{eq:11}
J_\alpha (tv) = \frac{t^2}{2} \left(
\iint_{\mathbb{R}^N \times \mathbb{R}^N} |\nabla v|^2 + m^2 v^2 \, dx \,
dy + \alpha \int_{\mathbb{R}^N} \gamma(v)^2 \, dy \right) \\
{}- \frac{t^{2
\theta}}{2 \theta} \int_{\mathbb{R}^N} \left( W* |\gamma(v)|^\theta \right)
|\gamma(v)|^{\theta} \, dy.
\end{multline}
Since $\theta \geq 2$ and by using (\ref{eq:21}),
it is easy to check that $t \in
(0,+\infty) \mapsto J(tv)$ possesses a unique critical point
$t=t(v)>0$ such that $t(v)v \in \mathcal{N}_\alpha$. Moreover, since
$J_\alpha$ has the mountain-pass geometry, $t=t(v)$ is a maximum
point. It follows that
\[ \inf_{v \in \mathcal{N}_\alpha} J_\alpha (v) = \inf_{v \in H^{\sharp}}
\max_{t >0} J_\alpha (tv).
\] The manifold $\mathcal{N}_\alpha$ splits $H^\sharp$ into two connected
components, and the component containing $0$ is open. In addition, $J_\alpha$
is non-negative on this component, because $\langle J'_\alpha (tv),v
\rangle \geq 0$ when $0<t \leq t(v)$. It follows immediately that any
path $\gamma \colon [0,1] \to H^\sharp$ with $\gamma (0)=0$ and $J_\alpha
(\gamma(1))<0$ must cross $\mathcal{N}_\alpha$, so that
\[ E_\alpha \geq \inf_{v \in \mathcal{N}_\alpha}J_\alpha (v).
\] The proof of (\ref{eq:10}) is complete.
\end{proof}

Following completely analogous argument in Theorem 3.14 and Theorem 5.1 in \cite{CN}, we can state the following result.

\begin{theorem} \label{th:CZN}
Let $\alpha + m >0 $ and  $(W)$ holds. Then
$v_\alpha \in C^\infty([0, + \infty) \times \mathbb{R}^N)$, $v_\alpha(x,y)
>0$ in $[0, \infty) \times \mathbb{R}^N$ and for any $0 \leq \sigma
\in (-\alpha, m)$ there exists $C>0$ such that
\[ 0 < v_\alpha(x,y) \leq C e^{-(m - \sigma) \sqrt{x^2 + |y|^2}} \,
e^{-\sigma x}
\] for all $(x,y) \in [0, + \infty) \times \mathbb{R}^N$. In particular,
\[
0<v_\alpha (0,y)  \leq C e^{-\delta |y|} \quad\text{for every $y \in \mathbb{R}^N$}
\]
where $0<\delta <m+\alpha$ if $\alpha \leq 0$, and $\delta = m$ if $\alpha >0$.
\end{theorem}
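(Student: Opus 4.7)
The plan is to follow the strategy from Theorems 3.14 and 5.1 of \cite{CN}, updating the nonlinearity from $|u|^{2}$ to $(W*|u|^{\theta})|u|^{\theta-2}u$ and checking that the structural estimates needed for the regularity bootstrap and the exponential barrier survive. There are three logically distinct steps: nonnegativity with regularity and strict positivity; pointwise decay $v_{\alpha}\to 0$ at infinity; and construction of an explicit exponential supersolution.

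\textbf{Nonnegativity, regularity, positivity.} Since $W \geq 0$ and every other term of $J_{\alpha}$ involves $v$ only through $|\nabla v|^{2}$, $v^{2}$ and $|\gamma(v)|^{\theta}$, one has $J_{\alpha}(|v|) = J_{\alpha}(v)$ and the Nehari manifold is invariant under $v \mapsto |v|$, so I may assume $v_{\alpha} \geq 0$. Interior $C^{\infty}$-smoothness in $\mathbb{R}_{+}^{N+1}$ is immediate from $-\Delta v_{\alpha} + m^{2} v_{\alpha} = 0$ by classical elliptic theory. Regularity up to $\{x=0\}$ follows from $\gamma(v_{\alpha}) \in H^{1/2}(\mathbb{R}^{N}) \hookrightarrow L^{q}$ for the range of $q$ controlled by (\ref{eq:5}); together with (W) and Young's inequality (\ref{eq:hlss}), this places the boundary datum
\begin{equation*}
-\alpha v_{\alpha} + (W * |\gamma(v_{\alpha})|^{\theta})|\gamma(v_{\alpha})|^{\theta-2}\gamma(v_{\alpha})
\end{equation*}
in $L^{\infty} \cap C^{0}$, and a Moser-type bootstrap on the Neumann problem promotes $v_{\alpha}$ to $C^{\infty}([0,\infty)\times\mathbb{R}^{N})$. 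Strict positivity then follows from the strong maximum principle for $-\Delta + m^{2}$ in the open half-space and from Hopf's boundary point lemma ruling out zeros of the trace.

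\textbf{Decay to zero at infinity.} Before invoking any barrier I need $v_{\alpha}(x,y) \to 0$ as $x^{2}+|y|^{2} \to \infty$. Because $v_{\alpha} \in H^{1}(\mathbb{R}_{+}^{N+1})$ and $\gamma(v_{\alpha}) \in H^{1/2}(\mathbb{R}^{N})$, the local $L^{2}$-mass on annular shells tends to zero; combined with the fact that $W \to 0$ at infinity, so that $W * |\gamma(v_{\alpha})|^{\theta}$ vanishes at infinity, a Moser iteration on shells promotes this to uniform $L^{\infty}$-smallness outside large balls.

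\textbf{Exponential barrier and trace bound.} For $\sigma$ satisfying $0 \leq \sigma$ and $\sigma \in (-\alpha, m)$, set $\Phi(x,y) = C\, e^{-(m-\sigma)r}\, e^{-\sigma x}$ with $r = \sqrt{x^{2}+|y|^{2}}$. Using the identity $-\Delta e^{-ar} = -a^{2} e^{-ar} + (aN/r)\, e^{-ar}$ in $\mathbb{R}^{N+1}\setminus\{0\}$ and the product rule, a direct computation gives
\begin{equation*}
-\Delta \Phi + m^{2} \Phi = \left[ 2\sigma(m-\sigma)\left(1 - \frac{x}{r}\right) + \frac{(m-\sigma)N}{r} \right] \Phi \;\geq\; 0,
\end{equation*}
while at $x=0$, since $\partial_{x} e^{-(m-\sigma)r}|_{x=0}=0$, one finds $-\partial_{x} \Phi|_{x=0} = \sigma\, \Phi|_{x=0}$ and hence
\begin{equation*}
-\partial_{x} \Phi|_{x=0} + \alpha\, \Phi|_{x=0} = (\sigma + \alpha)\, \Phi|_{x=0} \;>\; 0.
\end{equation*}
By the previous step, for $|y|$ sufficiently large the coefficient $(W*|\gamma(v_{\alpha})|^{\theta})\,|\gamma(v_{\alpha})|^{\theta-2}$ is smaller than $\sigma+\alpha$, so outside a large half-ball $\{r \leq R_{0}\}$ the function $\Phi$ is a supersolution of the boundary value problem satisfied by $v_{\alpha}$. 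Enlarging $C$ to dominate $v_{\alpha}$ on the hemisphere $\{r = R_{0},\ x \geq 0\}$ and invoking the comparison principle for $-\Delta + m^{2}$ with oblique Neumann boundary condition in the exterior gives $v_{\alpha} \leq \Phi$ in $\{r \geq R_{0}\}$; a further enlargement of $C$ handles the bounded region. Setting $x=0$ yields $v_{\alpha}(0,y) \leq C e^{-(m-\sigma)|y|}$, and as $\sigma$ varies over the admissible range the exponent $m-\sigma$ fills $(0, m+\alpha)$ when $\alpha \leq 0$ and attains $\delta=m$ (by $\sigma=0$) when $\alpha > 0$. The main obstacle I anticipate is rigorously justifying the comparison principle with the nonlinear Neumann term on an unbounded exterior; the device is to absorb $(W*|\gamma(v_{\alpha})|^{\theta})|\gamma(v_{\alpha})|^{\theta-2}$ into the linear boundary coefficient using the smallness from Step 2, thereby reducing to a linear Neumann comparison.
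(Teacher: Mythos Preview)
Your proposal is correct and mirrors precisely what the paper does: the paper gives no self-contained proof of this theorem but simply invokes ``completely analogous argument in Theorem 3.14 and Theorem 5.1 in \cite{CN}'', and your outline reproduces exactly those arguments (Moser bootstrap for regularity and $L^{\infty}$-decay, strong maximum principle/Hopf lemma for positivity, and the explicit barrier $e^{-(m-\sigma)r}e^{-\sigma x}$ with a linearised Robin comparison for the exponential bound). One small wording issue: the boundary datum is not in $L^{\infty}\cap C^{0}$ directly from $\gamma(v_{\alpha})\in H^{1/2}$ and Young's inequality---that is the \emph{output} of the Moser iteration, not its input---but since you immediately invoke the bootstrap this does not affect the argument.
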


\section{The Palais-Smale condition}

For any $v \in \h$ we denote
\[
\mathbb{D}(v)=\iint_{\mathbb{R}^{N}\times
\mathbb{R}^N}W(x-y)|\gamma(v)(x)|^{\theta}
|\gamma(v)(y)|^{\theta}dxdy.
\]
Inequality (\ref{eq:15}) yields immediately
\begin{equation} \label{Dbdd}
\mathbb{D}(v)\leq K\Vert v \Vert^{2 \theta}
\end{equation}
for every $v\in \h.$
\begin{lemma}
\label{lemPSto0}
Let $\{v_{n}\}_n$ be a sequence in $H^{1}(\mathbb{R}_+
^{N+1})$ such that $v_{n}\rightharpoonup0$ weakly in $\h$
\[ I(v_{n})\rightarrow c < E_{V_{\infty}}\text{\quad and \quad}
I'(v_n) \rightarrow0
\]
where $V_{\infty}:=\liminf\limits_{\left\vert x\right\vert
\rightarrow\infty}V(x)>0.$ Then a subsequence of $\{v_{n}\}_n$ converges
strongly to $0$ in $\h$.
\end{lemma}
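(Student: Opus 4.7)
The plan is to follow the standard ``Palais--Smale below the limit energy'' scheme, comparing $I$ with the translation-invariant functional $I_\infty$ obtained from (\ref{eq:4}) by replacing $V(y)$ with the constant $V_\infty$. As a preliminary, I would establish that $\{v_n\}$ is bounded in $\h$. The identity
\[
I(v_n) - \tfrac{1}{2\theta}\langle I'(v_n),v_n\rangle = \tfrac{\theta-1}{2\theta}\Bigl(\iint_{\mathbb{R}^{N+1}_+}(|\nabla v_n|^2+m^2 v_n^2)\,dx\,dy + \int_{\mathbb{R}^N}V(y)\gamma(v_n)^2\,dy\Bigr),
\]
combined with the coercivity $\iint(|\nabla v|^2+m^2 v^2) + \int V\gamma(v)^2 \geq \delta\|v\|^2$ for some $\delta>0$, which follows from the bound $V+V_0\geq 0$ of (V1) with $V_0\in(0,m)$ and the trace inequality (\ref{eq:21}), forces $\|v_n\|$ to stay bounded.

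Next I would pass from $I$ to $I_\infty$. Condition (V2) combined with $V_\infty=\liminf_{|x|\to\infty}V(x)$ implies $V(y)\to V_\infty$ as $|y|\to\infty$; splitting $\int (V-V_\infty)\gamma(v_n)^2\,dy$ over $\{|y|\leq R\}$ and $\{|y|>R\}$, using the compact trace embedding on bounded sets (so $\gamma(v_n)\to 0$ in $L^2_{\mathrm{loc}}$ since $v_n\rightharpoonup 0$) and then letting $R\to\infty$, yields $I_\infty(v_n)\to c$ and, by the same argument applied to test functions, $I'_\infty(v_n)\to 0$ in $(\h)^\ast$. Arguing by contradiction, suppose $v_n\not\to 0$ strongly. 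Then the coercivity and $\langle I'(v_n),v_n\rangle\to 0$ force $\mathbb{D}(v_n)\not\to 0$; yet $v_n\rightharpoonup 0$ gives $\gamma(v_n)\to 0$ strongly in $L^p_{\mathrm{loc}}(\mathbb{R}^N)$ for every $p\in [2,\tfrac{2N}{N-1})$, so by a Lions-type non-vanishing lemma applied to $\gamma(v_n)$ there exist $y_n\in\mathbb{R}^N$ with $|y_n|\to\infty$ and $R,\beta>0$ such that $\int_{B_R(y_n)}\gamma(v_n)^2\,dy\geq\beta$.

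Set $w_n(x,y):=v_n(x,y+y_n)$. The $y$-translation invariance of $I_\infty$ makes $\{w_n\}$ a bounded PS sequence for $I_\infty$ at level $c$ with $w_n\rightharpoonup w\neq 0$ in $\h$. Weak continuity of the Hartree nonlinearity---via (\ref{eq:hls}), local compactness of the trace, and a Vitali-type argument---implies that $w$ is a non-trivial critical point of $I_\infty$ on the full space, hence lies on the corresponding Nehari set, so $I_\infty(w)\geq E_{V_\infty}$. A Brezis--Lieb type decomposition applied to every homogeneous piece of $I_\infty$, including a nonlocal Brezis--Lieb identity for $\mathbb{D}$, gives $I_\infty(w_n)=I_\infty(w_n-w)+I_\infty(w)+o(1)$; since $w_n-w\rightharpoonup 0$ is still a PS sequence for $I_\infty$ with coercive quadratic part, $\liminf I_\infty(w_n-w)\geq 0$, whence $c\geq E_{V_\infty}$, contradicting the hypothesis.

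The main technical obstacle throughout is the nonlocal Hartree term: both the weak continuity of $v\mapsto (W*|\gamma(v)|^\theta)|\gamma(v)|^{\theta-2}\gamma(v)$ and the Brezis--Lieb splitting for $\mathbb{D}(v)$ must accommodate the decomposition $W=W_1+W_2\in L^r(\mathbb{R}^N)+L^\infty(\mathbb{R}^N)$ that the paper uses, combining (\ref{eq:hls}) with local compactness of the trace. Once these nonlocal analogues of the classical tools are in place, the bubble-extraction scheme concludes the proof in the standard way.
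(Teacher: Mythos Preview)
Your scheme is sound but it is \emph{not} the argument the paper gives. The paper never translates, never invokes Lions' vanishing lemma, and never uses a Brezis--Lieb splitting for $\mathbb{D}$. Instead it works directly on the original sequence: after proving boundedness (via the same coercivity you use) it picks $\alpha<V_\infty$ with $c<E_\alpha$, chooses a large radius $R_\varepsilon$ on which the annular energy is small, and decomposes $v_n=u_n+w_n$ with a radial cut-off $\xi_\varepsilon$; here $u_n=(1-\xi_\varepsilon)v_n$ carries the inner part (which dies by local compactness of the trace) and $w_n=\xi_\varepsilon v_n$ lives where $V\ge\alpha$. A direct Young-inequality estimate gives $\mathbb{D}(v_n)-\mathbb{D}(w_n)=o(1)$, whence $I'(w_n)w_n=O(\varepsilon)+o(1)$; projecting $w_n$ onto the Nehari set and using $V\ge\alpha$ on its support yields $I(t_n w_n)\ge E_\alpha$, so $c\ge E_\alpha$, the desired contradiction. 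Your approach trades this hands-on cut-off computation for the cleaner, translation-based bubble extraction; the price is that you must import a Lions lemma adapted to traces of $H^1(\mathbb{R}^{N+1}_+)$ and a nonlocal Brezis--Lieb identity for $\mathbb{D}$ (both available, e.g.\ in Ackermann's framework), whereas the paper only needs the elementary Young estimate (\ref{eq:hls}) and local compactness.

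One point you should be careful about: in this paper $E_{V_\infty}$ is defined as the mountain-pass level of $J_{V_\infty}$ on the \emph{symmetric} space $H^\sharp$ (see (\ref{Mountainpass}) and (\ref{eq:10})). Your translated weak limit $w$ is a nontrivial critical point of $I_\infty$ on the full space $\h$, so the inequality you write, $I_\infty(w)\ge E_{V_\infty}$, presupposes that the full-space Nehari infimum for $J_{V_\infty}$ coincides with the symmetric one. The paper's own proof faces the same issue at the step $I(t_n w_n)\ge E_\alpha$ (since $w_n$ is not radial), so this is a shared subtlety rather than a flaw specific to your route; still, in your write-up you should either justify this identification or rephrase the threshold as the full-space level from the outset.
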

\begin{proof}
First of all, we recall (\ref{eq:21}), and we rewrite $I(v)$ as
\begin{equation*}
I(v) = \frac{1}{2} \int_{\mathbb{R}_{+}^{N+1}} |\nabla v|^2 + m^2 |v|^2 + \frac{1}{2} \int_{\mathbb{R}^N} \left(V+V_0 \right) \gamma(v)^2 - \frac{V_0}{2} \int_{\mathbb{R}^N} \gamma(v)^2 - \frac{1}{2\theta}\mathbb{D}(v),
\end{equation*}
so that $V+V_0 \geq 0$ everywhere. Now,
\begin{equation}\label{pri}
c+1+\|v_n\| \geq I(v_n) - \frac{1}{2} \langle I'(v_n),v_n \rangle = \left( \frac{1}{2} -\frac{1}{2\theta} \right) \mathbb{D}(v_n),
\end{equation}
which implies that, for some constants $C_1$ and $C_2$,
\[
\frac{1}{2\theta}\mathbb{D}(v_n) \leq C_1 \|v_n\| +C_2.
\]
But then, using (\ref{eq:21}),
\begin{align*}
c+1 &\geq I(v_n) \\
&\geq \frac{1}{2} \int_{\mathbb{R}_{+}^{N+1}} |\nabla v_n|^2 + \frac{m^2}{2} \int_{\mathbb{R}_{+}^{N+1}} | v_n|^2 -\frac{V_0}{2} \left( m \int_{\mathbb{R}_{+}^{N+1}} |v_n|^2 + \frac{1}{m} \int_{\mathbb{R}_{+}^{N+1}} |\nabla v_n|^2  \right) \\
&\qquad {} -C_1 \|v_n\| -C_2\\
&= \frac{1}{2} \left( 1 - \frac{V_0}{m} \right) \int_{\mathbb{R}_{+}^{N+1}} |\nabla v_n|^2 +\frac{m(m-V_0)}{2} \int_{\mathbb{R}_{+}^{N+1}} |v_n|^2-C_1 \|v_n\| -C_2
\end{align*}
and since $m-V_0>0$ we deduce that $\{v_n\}$ is a bounded sequence in $H^1(\mathbb{R}_{+}^{N+1})$.

A standard argument shows that $\|v_n\|$ is bounded in
$\h$ and
\[
\frac{\theta-1}{2\theta} \left(\| v_{n}\|^{2} +
\int_{\mathbb{R}^{N}} V(y) \gamma(v_n)^2 dy \right) \rightarrow c\text{\quad and \quad } \frac{\theta-1}{2\theta}\mathbb{D}(v_{n})\rightarrow c.
\]
Therefore $c \geq 0$. If $c=0$, then
\begin{align*}
o(1) &=
 \left(\| v_{n}\|^{2} +
\int_{\mathbb{R}^{N}} V(y) \gamma(v_n)^2 dy \right) \\
&\geq
\left( 1 - \frac{V_0}{m} \right) \int_{\mathbb{R}_{+}^{N+1}} |\nabla v_n|^2 + {m(m-V_0)} \int_{\mathbb{R}_{+}^{N+1}} |v_n|^2,
\end{align*}
and $m -V_0 >0$ yields that $v_n \to 0$ strongly in $\h$.

Assume therefore that $c>0.$ Fix $\alpha <V_{\infty}$ such that $c< E_{\alpha},$
and $R_{0}>0$ such that $V(x)\geq\alpha$ if $\left\vert x\right\vert
\geq R_{0}.$ Let $\varepsilon\in(0,1).$ Since $\{v_{n}\}_n$ is bounded in
$\h$ there exists $R_{\varepsilon}>R_{0}$ such that
$R_{\varepsilon}\rightarrow + \infty$ as $\varepsilon\rightarrow0$
and, after passing to a subsequence,%
\begin{equation}\label{guscio} \iint_{S_{R_ \epsilon}} \left(|\nabla
v_n|^2 + m^2 v_n^2 \right) dx \, dy + \int_{ A_{R_ \epsilon}}
V(y)\gamma(v_n)^2\, dy <\varepsilon\text{ \ \ for all }n\in\mathbb{N}.
\end{equation} where
\begin{align*}
S_{R_ \epsilon} &= \{ z=(x,y) \in {\mathbb{R}_+^{N+1}}
\mid {R_{\varepsilon}<\left\vert z \right\vert <R_{\varepsilon}+1} \}
\\
A_{R_ \epsilon} &= \{ y \in {\mathbb{R}^{N}} \mid
{R_{\varepsilon}<\left\vert y \right\vert <R_{\varepsilon}+1} \}.
\end{align*}
If this is not the case, for any $m \in \mathbb{N}$, $m
\geq R_0$ there exists $\nu(m) \in\mathbb{N}$ such that
\begin{equation}\label{guscio1} \iint_{S_m} \left(|\nabla v_n|^2 + m^2
v_n^2 \right) dx \, dy + \int_{ A_m} V(y)\gamma(v_n)^2\, dy \geq
\varepsilon
\end{equation} for any $n\in\mathbb{N}$, $n \geq \nu(m)$. We can
assume that $\nu(m)$ is nondecreasing.  Therefore for any integer $m
\geq R_0$ there exists an integer $\nu(m)$ such that
\begin{multline}
\|v_n\|^2 + \int_{\mathbb{R}^{N}} V(y)\gamma(v_n)^2\,
dy \geq \iint_{T_m} \left(|\nabla v_n|^2 + m^2 v_n^2 \right) dx \, dy
+ \int_{ B_m} V(y)\gamma(v_n)^2\, dy \\
\geq (m - R_0)
\varepsilon \label{contr}
\end{multline}
for any $n \geq \nu(m)$, where $T_m = \{ z=(x,y) \in
{\mathbb{R}_+^{N+1}} \mid R_{0}<\left\vert z \right\vert < m \}$ and
$B_m = \{ y \in {\mathbb{R}^{N}} \mid R_0 <\left\vert y \right\vert <
m \},$ which contradicts the fact that $\| v_n\|$ is bounded.

We may assume that $\left\vert v_{n}\right\vert \rightarrow0$ strongly
in $L_{\mathrm{loc}}^{p}(\mathbb{R}^{N})$ with $p<2N/(N-1)$
and thus $\left\vert \gamma( v_{n}) \right\vert \rightarrow0$ strongly
in $L_{\mathrm{loc}}^{p}(\mathbb{R}^{N})$.

Let $\xi_\varepsilon \in C^{\infty}(\mathbb{R}_+^{N+1})$ be a symmetric function,
namely $\xi_\varepsilon(x, gy) = \xi_\varepsilon(x,y)$ for all $g \in O(N)$, $x >0$, $y \in
\mathbb{R}^{N}$.  Moreover assume that $\xi_\varepsilon(z)= 0$ if $|z| \leq
R_\varepsilon$ and $\xi_\varepsilon(z)= 1$ if $|z| \geq R_\varepsilon + 1$ and
$\xi(z)\in\lbrack0,1]$ for all $z \in\mathbb{R}_{+}^{N+1}$. Set $w_n= \xi_\varepsilon
v_n$.
We apply now Young's inequality (\ref{eq:hls}) with $p=q=2r/(2r-1)$ and $h=W$, $f=|\gamma(v_n)|^\theta$, $g=|\gamma(v_n)|^\theta - |\gamma(w_n)|^\theta$:
\begin{gather} \label{difD}
\left\vert
\mathbb{D}(v_{n})-\mathbb{D}(w_{n})\right\vert  \nonumber \\
\leq
\iint_{\mathbb{R}^{N}\times \mathbb{R}^N} W(x-y)\left\vert
|\gamma(v_{n}) (x)|^{\theta}|
\gamma(v_{n})(y)|^{\theta}-|\gamma(w_{n})(x)|^{\theta}|\gamma(w_{n})(y)|^{\theta}\right\vert dxdy \nonumber \\
= \iint_{\mathbb{R}^{N}\times \mathbb{R}^N} W(x-y) \Big| |\gamma(v_{n}) (x)|^{\theta}|
\gamma(v_{n})(y)|^{\theta}-|\gamma(v_n)(x)|^\theta |\gamma(w_n)(y)|^\theta \nonumber \\
\quad + |\gamma(v_n)(x)|^\theta |\gamma(w_n)(y)|^\theta-|\gamma(w_{n})(x)|^{\theta}|\gamma(w_{n})(y)|^{\theta}  \Big| dx\, dy \nonumber\\
\leq \iint_{\mathbb{R}^{N}\times \mathbb{R}^N} W(x-y) |\gamma(v_n)(x)|^\theta \left| |\gamma(v_n)(y)|^\theta - |\gamma(w_n)(y)|^\theta \right|\, dx\, dy \nonumber \\
\quad + \iint_{\mathbb{R}^{N}\times \mathbb{R}^N} W(x-y) |\gamma(w_n)(y)|^\theta \left| |\gamma(v_n)(x)|^\theta - |\gamma(w_n)(x)|^\theta \right| \, dx\, dy
\nonumber \\
 \leq 2
\iint_{\mathbb{R}^N \times \mathbb{R}^{N}}
W(x-y) |\gamma(v_{n})(x)|^{\theta} \left\vert
|\gamma(v_{n})(y)|^{\theta}-|\gamma(w_{n})(y)|^{\theta}\right\vert
dxdy\nonumber \\
 \leq 2C |W|_r \left|\gamma(v_n)\right|^{\theta}_{\frac{2r \theta}{2r-1}} \left| |\gamma(v_n)|^\theta - |\gamma(w_n)|^\theta
\right|_{\frac{2r}{2r-1}} = o(1),
\end{gather}
since $|\gamma(v_n)|^\theta - |\gamma(w_n)|^\theta \to 0$ strongly in $L_{\mathrm{loc}}^{\frac{2r}{2r-1}}(\mathbb{R}^N)$.
Here and in the following $C$ denotes some positive
constant independent of $n$, not necessarily the same one.
Similarly,%
\begin{multline*}
\Big\vert \int_{\mathbb{R}^{N}} \left( W\ast\left\vert
\gamma(v_{n}) \right\vert^{\theta} \right) \left\vert \gamma( v_{n})
\right\vert ^{\theta-2}\gamma(v_{n})
\gamma({w}_{n}) \\
{}-\int_{\mathbb{R}^{N}}\left( W\ast\left\vert \gamma(w_{n})\Big\vert
^{\theta}\right) \left\vert \gamma(w_{n})\right\vert^{\theta-2}
\gamma(w_{n}) \gamma({w}_{n}) \right\vert \\
\leq 2C |W|_r \left|\gamma(v_n)\right|^{\theta}_{\frac{2r \theta}{2r-1}} \left| |\gamma(v_n)|^\theta - |\gamma(w_n)|^\theta
\right|_{\frac{2r}{2r-1}} = o(1).
\end{multline*}
%
%
Therefore,
\begin{align*} \left\vert
I^{\prime}(v_{n})w_{n}-I^{\prime}(w_{n})w_{n} \right\vert & \leq C
\iint_{S_\epsilon} \left(|\nabla v_n|^2 + m^2 v_n^2 \right) dx \, dy +
\int_{ A_\epsilon} V(y)\gamma(v_n)^2\, dy +o(1).
\end{align*}
Set $u_n = (1- \xi) v_n$. Analogously we have
\begin{align*} \left\vert
I^{\prime}(v_{n})u_{n}-I^{\prime}(u_{n})u_{n} \right\vert & \leq C
\iint_{S_\epsilon} \left(|\nabla u_n|^2 + m^2 u_n^2 \right) dx \, dy +
\int_{ A_\epsilon} V(y)\gamma(u_n)^2\, dy +o(1).
\end{align*}
Therefore
\begin{equation}\label{f1} I^{\prime}(u_{n})u_{n} = O(\epsilon) + o(1)
\end{equation} and
\begin{equation}\label{f2} I^{\prime}(w_{n})w_{n}=O(\epsilon) + o(1).
\end{equation}
From (\ref{f1}), we derive that $I(u_n)= \frac{\theta -1 }{2 \theta}
\mathbb{D}(u_{n}) + O(\epsilon) + o(1) \geq O(\epsilon) + o(1)$.

Consider $t_n >0$ such that $I^{\prime}(t_n w_{n}) (t_n
w_{n})= 0$ for any $n$, namely
$$
t_n^{2 (\theta -1)} = \frac{\| w_n \|^2 +  \int_{\mathbb{R}^{N}}
V(y) \gamma(w_n)^2 \, dy}{\mathbb{D}(w_{n})}.
$$
From $(\ref{f2})$, we have that $t_n = 1 + O(\epsilon) + o(1)$.
Therefore from the characterization of $E_\alpha$ we have
\begin{align*}
c + o(1) &= I (v_n)= I(u_n) + I (w_n) + O(\epsilon)
\geq I (w_n) + O(\epsilon) + o(1) \\
& \geq I (t_n w_n) + O(\epsilon)
+ o(1) \geq E_\alpha + O(\epsilon) + o(1).
\end{align*}
As $n \to + \infty$, $ \epsilon \to 0$, we derive that $c \geq
E_\alpha$ which is a contradiction.  Hence, $c=0$ and
$v_{n}\rightarrow0$ strongly in $\h$.
\end{proof}
\begin{lemma} \label{lemnils}
Let $\{v_{n}\}_n$ be a sequence in $\h$ such
that $v_{n}\rightharpoonup v$ weakly in $\h$. The following hold:
  \begin{itemize}
    \item[(i)] $\mathbb{D}^{\prime}(v_{n})u\rightarrow\mathbb{D}^{\prime}(v) u$
for all $u\in H^{1}(\mathbb{R}_{+}^{N+1})$.
    \item[(ii)] After passing to a subsequence, there exists a
sequence $\{\widetilde{v}_{n}\}_n$ in $H^{1}(\mathbb{R}_{+}^{N+1})$ such
that $\widetilde{v} _{n} \rightarrow v$ strongly in $\h$,
\begin{align*}
\mathbb{D}(v_{n})-\mathbb{D}(v_{n}-\widetilde{v}_{n}) &
\rightarrow \mathbb{D}(v)\quad\text{in $\mathbb{R}$},\\
\mathbb{D}^{\prime}(v_{n})-\mathbb{D}^{\prime}(v_{n}-\widetilde{v}_{n})
& \rightarrow\mathbb{D}^{\prime}(v)\quad\text{in
$H^{-1}(\mathbb{R}_{+}^{N+1})$}.
\end{align*}
  \end{itemize}
\end{lemma}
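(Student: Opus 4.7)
The proof rests on two pillars: the compact embedding $H^{1/2}(\mathbb{R}^N) \hookrightarrow L^p_{\mathrm{loc}}(\mathbb{R}^N)$ for $p < 2N/(N-1)$, together with a Brezis--Lieb splitting tailored to the nonlocal bilinear term $\mathbb{D}$.

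For part~(i) I would first compute
\[
\mathbb{D}'(v)[u] = 2\theta \int_{\mathbb{R}^N} \bigl(W * |\gamma(v)|^\theta\bigr)(y)\, |\gamma(v)(y)|^{\theta-2}\gamma(v)(y)\, \gamma(u)(y)\, dy,
\]
and reduce by density (using the uniform $H^{-1}$ bound $\|\mathbb{D}'(v_n)\| \leq C\|v_n\|^{2\theta-1}$ obtained from~\eqref{Dbdd}) to test functions $u \in C_c^\infty(\overline{\mathbb{R}_+^{N+1}})$ whose trace $\gamma(u)$ is compactly supported. Since $v_n \rightharpoonup v$ in $H^1(\mathbb{R}_+^{N+1})$, up to a subsequence $\gamma(v_n) \to \gamma(v)$ pointwise a.e.\ and strongly in $L^q_{\mathrm{loc}}(\mathbb{R}^N)$ for every $q < 2N/(N-1)$, while remaining bounded in $L^{2\theta r/(2r-1)}(\mathbb{R}^N)$ by~\eqref{eq:6}. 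This delivers strong convergence of $|\gamma(v_n)|^{\theta-2}\gamma(v_n)\gamma(u)$ in $L^{2r/(2r-1)}(\mathbb{R}^N)$ (the compact support of $\gamma(u)$ localises the problem) and weak convergence of $|\gamma(v_n)|^\theta$ in $L^{2r/(2r-1)}(\mathbb{R}^N)$; Young's inequality~\eqref{eq:hlss} with $W \in L^r$ then yields weak convergence of the convolution in the corresponding dual space, and pairing weak against strong concludes.

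For part~(ii) I would take simply $\widetilde{v}_n = v$ for every $n$: this trivially satisfies the strong convergence requirement and turns the claim into the classical Brezis--Lieb decomposition for $\mathbb{D}$ and $\mathbb{D}'$. After extracting a subsequence so that $\gamma(v_n) \to \gamma(v)$ a.e., the Brezis--Lieb lemma applied to $j_1(t) = |t|^\theta$ and $j_2(t) = |t|^{\theta-2}t$ provides
\[
|\gamma(v_n)|^\theta - |\gamma(v_n) - \gamma(v)|^\theta - |\gamma(v)|^\theta \to 0 \quad \text{in } L^1(\mathbb{R}^N),
\]
together with an analogous statement for the $(\theta-1)$-homogeneous map. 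The uniform bounds on $\gamma(v_n)$ in $L^s(\mathbb{R}^N)$ for every $s \in [2, 2N/(N-1)]$ given by~\eqref{eq:6} yield uniform boundedness in some $L^{q}$ with $q > 2r/(2r-1)$ (the strict inequality $2\theta r/(2r-1) < 2N/(N-1)$ established in Section~2 provides exactly this extra room), and a straightforward interpolation $\|h_n\|_{2r/(2r-1)} \leq \|h_n\|_1^{1-\alpha} \|h_n\|_q^{\alpha}$ upgrades the $L^1$ convergence to $L^{2r/(2r-1)}(\mathbb{R}^N)$. Inserting these convergences into an expansion analogous to~\eqref{difD} and applying Young's inequality produces $\mathbb{D}(v_n) - \mathbb{D}(v_n - \widetilde{v}_n) \to \mathbb{D}(v)$, while the statement for $\mathbb{D}'$ in $H^{-1}(\mathbb{R}_+^{N+1})$ follows from the same splitting applied to both slots of the bilinear form, tested uniformly over $u$ in the unit ball of $H^1(\mathbb{R}_+^{N+1})$.

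The main obstacle is the gap in integrability between what the Brezis--Lieb lemma naturally yields ($L^1$ convergence of pointwise nonlinearities) and what the Young/HLS machinery controlling $\mathbb{D}$ requires (strong convergence in $L^{2r/(2r-1)}$). The strict exponent inequalities built into assumption~(W) leave precisely the margin needed to close this gap via interpolation, and keeping track of this upgrade for the derivative---where uniformity over the test function must also be maintained---is the delicate step.
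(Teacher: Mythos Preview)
Your approach is correct but takes a different route from the paper's. The paper simply invokes Lemma~3.5 of Ackermann~\cite{a}, where $\widetilde v_n$ is constructed as $\chi_{R_n} v_n$ with a smooth cut-off supported in $B_{R_n}$ and radii $R_n\to\infty$ chosen slowly enough that $\chi_{R_n} v_n \to v$ strongly; the (near) disjointness of the supports of $\widetilde v_n$ and $v_n-\widetilde v_n$ then makes every cross-term in the expansion of $\mathbb{D}$ and $\mathbb{D}'$ small directly, with no Brezis--Lieb step required. Your choice $\widetilde v_n = v$ is more economical and turns~(ii) into a genuine nonlocal Brezis--Lieb splitting, which is a cleaner statement once proved. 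One point your sketch passes over: after writing $a_n:=|\gamma(v_n)|^\theta = b_n + c + h_n$ with $b_n:=|\gamma(v_n-v)|^\theta$, $c:=|\gamma(v)|^\theta$ and $h_n\to 0$ in $L^{2r/(2r-1)}$, the expansion of $\mathbb{D}(v_n)-\mathbb{D}(v_n-v)$ still contains the cross-term $\int_{\mathbb{R}^N}(W*b_n)\,c$, which involves neither $h_n$ nor any compact support. To dispose of it you need the weak convergence $b_n\rightharpoonup 0$ in $L^{2r/(2r-1)}$ (bounded plus a.e.\ convergent in a reflexive space), together with a localisation argument via $W_2*c\in C_0(\mathbb{R}^N)$ for the $L^\infty$ part of the kernel; the analogous issue recurs, with more terms, in the $\mathbb{D}'$ splitting. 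None of this is hard, but it is precisely the bookkeeping that the cut-off construction in~\cite{a} is designed to sidestep.
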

\begin{proof} The proof is completely analogous to that of Lemma 3.5
in \cite{a}. The function $\widetilde{v}_{n}$ is the product of
$v_{n}$ with a smooth cut-off function, so $\widetilde{v}_{n}$ belongs
to $\h$ \ if $v_{n}$ does. We omit the details.
\end{proof}

\bigskip

\begin{proposition}
  \label{propPS}
The functional $I \colon \h \rightarrow\mathbb{R}$
satisfies the Palais-Smale condition $(PS)_{c}$ at each \(c< E_{V_{\infty}}\),
where $V_{\infty}:=\liminf_{\left\vert x\right\vert
\rightarrow\infty}V(x).$
\end{proposition}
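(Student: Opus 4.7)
The plan is a standard splitting argument: decompose a Palais-Smale sequence as a strongly convergent part plus a remainder that converges weakly to zero, then apply Lemma \ref{lemPSto0} to the remainder.

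\textbf{Step 1 (Boundedness).} Let $\{v_n\}$ be a $(PS)_c$ sequence in $\h$ with $c<E_{V_\infty}$. Repeating verbatim the computation used at the beginning of the proof of Lemma \ref{lemPSto0}, namely combining $I(v_n)-\frac{1}{2}\langle I'(v_n),v_n\rangle=\frac{\theta-1}{2\theta}\mathbb{D}(v_n)$ with the rewriting of $I$ that exploits $V+V_0\geq 0$ and estimate (\ref{eq:21}), I obtain that $\{v_n\}$ is bounded in $\h$.

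\textbf{Step 2 (Weak limit is a critical point).} Up to a subsequence, $v_n\rightharpoonup v$ in $\h$, $\gamma(v_n)\to\gamma(v)$ in $L^p_{\mathrm{loc}}(\mathbb R^N)$ for $p<2N/(N-1)$, and $v_n\to v$ a.e. For any test function $\varphi\in\h$, weak convergence handles the quadratic terms (recall that $\varphi\mapsto\int V\gamma(v)\gamma(\varphi)$ is linear and continuous), while Lemma \ref{lemnils}(i) gives $\mathbb{D}'(v_n)\varphi\to\mathbb{D}'(v)\varphi$. Hence $\langle I'(v_n),\varphi\rangle\to\langle I'(v),\varphi\rangle$ and therefore $I'(v)=0$. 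In particular
\[
I(v)=I(v)-\tfrac{1}{2}\langle I'(v),v\rangle=\tfrac{\theta-1}{2\theta}\mathbb{D}(v)\geq 0.
\]

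\textbf{Step 3 (Splitting).} Apply Lemma \ref{lemnils}(ii) to produce $\tilde v_n\to v$ strongly in $\h$ with
\[
\mathbb{D}(v_n)-\mathbb{D}(v_n-\tilde v_n)\to\mathbb{D}(v),\qquad \mathbb{D}'(v_n)-\mathbb{D}'(v_n-\tilde v_n)\to\mathbb{D}'(v)\text{ in }H^{-1}.
\]
Set $w_n:=v_n-\tilde v_n$, so that $w_n\rightharpoonup 0$ in $\h$. The strong convergence $\tilde v_n\to v$ together with weak convergence $v_n\rightharpoonup v$ gives, via the Brezis--Lieb splitting for the quadratic form
\[
Q(u):=\tfrac{1}{2}\iint_{\mathbb R^{N+1}_+}(|\nabla u|^2+m^2u^2)+\tfrac{1}{2}\int_{\mathbb R^N}V\gamma(u)^2,
\]
that $Q(v_n)=Q(v)+Q(w_n)+o(1)$ (here the cross terms vanish because $w_n\rightharpoonup 0$ and $V$ is bounded, so $\int V\gamma(\tilde v_n)\gamma(w_n)\to 0$). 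Combined with the analogous splitting for $\mathbb D$ one concludes
\[
I(v_n)=I(v)+I(w_n)+o(1),\qquad I'(w_n)\to 0\text{ in }H^{-1}.
\]
Consequently $I(w_n)\to c-I(v)=:c'$ and, since $I(v)\geq 0$, we have $c'\leq c<E_{V_\infty}$.

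\textbf{Step 4 (Conclusion via the vanishing lemma).} The sequence $\{w_n\}$ is in $\h$, converges weakly to $0$, and satisfies $I(w_n)\to c'<E_{V_\infty}$ and $I'(w_n)\to 0$. By Lemma \ref{lemPSto0}, $w_n\to 0$ strongly in $\h$, which together with $\tilde v_n\to v$ strongly yields $v_n\to v$ strongly in $\h$.

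\textbf{Expected main obstacle.} The delicate point is verifying the splitting $I(v_n)=I(v)+I(w_n)+o(1)$ and $I'(v_n)-I'(w_n)\to I'(v)$ in $H^{-1}$. The purely local quadratic part is handled by the weak-strong decomposition $v_n=\tilde v_n+w_n$, but the nonlocal convolution term requires exactly the Brezis--Lieb-type statement supplied by Lemma \ref{lemnils}(ii); without that lemma, the presence of the Hartree nonlinearity together with the potential-induced loss of compactness would obstruct the argument. Once both splittings are in place, applying Lemma \ref{lemPSto0} at the subcritical level $c'$ closes the proof.
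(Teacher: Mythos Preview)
Your proof is correct and follows essentially the same route as the paper: bound the Palais--Smale sequence, pass to a weak limit $v$ which is a critical point via Lemma~\ref{lemnils}(i), use Lemma~\ref{lemnils}(ii) to split $v_n=\tilde v_n+w_n$ with $\tilde v_n\to v$ strongly and $w_n\rightharpoonup 0$, deduce $I(w_n)\to c-I(v)\leq c$ and $I'(w_n)\to 0$, and conclude with Lemma~\ref{lemPSto0}. Your write-up is in fact more explicit than the paper's in justifying the splitting of the quadratic part $Q$ (which is elementary Hilbert-space orthogonality rather than Brezis--Lieb proper), and you correctly compute $I(v)=\tfrac{\theta-1}{2\theta}\mathbb{D}(v)$, whereas the paper has the harmless misprint $\tfrac{\theta-2}{2\theta}$.
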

\begin{proof} Let $v_{n}\in \h$ satisfy
\[ I(v_{n})\rightarrow c< E_{V_{\infty}}\quad\text{and}\quad
I'(v_{n})\rightarrow 0
\] strongly in the dual space $H^{-1}(\mathbb{R}_{+}^{N+1})$. Since
$\{v_{n}\}_n$ is bounded in $\h$ it contains a subsequence such that
$v_{n}\rightharpoonup v$ weakly in $\h$ and $\gamma(v_n)
\rightharpoonup \gamma(v)$ in $L^p(\mathbb{R} ^{N})$ for any $p \in[2,
2N/(N-1)]$.

By Lemma \ref{lemnils}, $v$ solves $(\ref{eq:1})$ and, after passing
to a subsequence, there exists a sequence $\{\widetilde{v}_{n}\}_n$ in
$\h$ such that $u_{n}:=v_{n}-\widetilde {v}_{n}\rightharpoonup0$
weakly in $\h$,%
\begin{align*}
I(v_{n})- I(u_{n}) & \rightarrow I(v)\quad\text{in
$\mathbb{R}$},\\ I'(v_{n}) - I' (u_{n}) & \rightarrow 0
\quad\text{strongly in $H^{-1}(\mathbb{R}_{+}^{N+1})$}.
\end{align*}
Hence, $I(v) = \frac{\theta- 2}{2 \theta} \mathbb{D}(v)
\geq 0$,
\[
I(u_{n})\rightarrow c-I (v)\leq c,\text{ \ \ and \ \ \ }
I'(u_{n})\rightarrow0
\]
strongly in $H^{-1}(\mathbb{R}_{+}^{N+1})$. By Lemma
\ref{lemPSto0}\ a subsequence of $\{u_{n}\}_n$ converges strongly to $0$
in $\h$. This implies that a subsequence of $\{ v_{n}\}_n $
converges strongly to $v$ in $\h$.
\end{proof}

\section{Mountain Pass Geometry}

Let us consider the limit problem
\begin{equation}\label{6}
  \begin{cases} -\Delta v + m^2 v =0 &\text{in
$\mathbb{R}^{N+1}_{+}$}\\ -\frac{\partial v}{\partial x} = - V_\infty
v + ( W*|v|^{\theta})|v|^{\theta-2}v &\text{in $\mathbb{R}^N
= \partial \mathbb{R}^{N+1}_{+}$}
  \end{cases}
\end{equation}
where $V_{\infty}:=\liminf_{\left\vert x\right\vert
\rightarrow\infty}V(x)>0$.
By Theorem \ref{th:CZN}, the first mountain pass value $E_{V_\infty}$ of
the functional $J_{V_\infty}$ associated to problem (\ref{6}) is
attained at a positive function $\omega_{\infty} \in
H^{1}(\mathbb{R}^{N+1}_+)$, which is symmetric $\omega_\infty (x, gy)=
\omega_\infty(x,y)$ for all $g \in O(N)$, $x>0$, $y \in
\mathbb{R}^{N}$.  Moreover, since $V_\infty>0$, we are allowed to choose $\sigma=0$, and
there exists $C>0$ such that
\begin{equation}\label{eq:13}
0 < \omega_\infty (x,y) \leq C e^{-m \sqrt{x^2 + |y|^2}}
\end{equation}
for all $(x,y) \in [0, + \infty) \times \mathbb{R}^N$.
In particular, $\gamma(\omega_\infty)$ is radially symmetric in
$\mathbb{R}^{N}$ and
$$
0 < \gamma(\omega_\infty)(y) \leq C e^{-m |y| }
$$
for any $y \in \mathbb{R}^{N}$. As in Theorem \ref{th:CZN}, a bootstrap procedure shows that
$\omega_\infty \in C^\infty([0,+\infty) \times \mathbb{R}^N)$.
\begin{lemma} We have
\begin{equation}\label{decgrad}
\left\vert
\nabla\omega_{\infty}(z)\right\vert =O(e^{-{m}\left\vert z
\right\vert}) \text{ \ \ as }\left\vert z\right\vert
\rightarrow\infty.
\end{equation}
\end{lemma}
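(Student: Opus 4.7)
The plan is to combine the pointwise decay $0<\omega_\infty(z) \leq C e^{-m|z|}$ from Theorem~\ref{th:CZN} (cf.\ \eqref{eq:13}) with local elliptic regularity for the linear equation $-\Delta \omega_\infty + m^2 \omega_\infty = 0$ satisfied in $\mathbb{R}^{N+1}_+$, splitting the argument according to whether the point $z=(x_0,y_0)$ is far from or near the boundary $\{x=0\}$.

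For interior points ($x_0\ge 1$) the ball $B_1(z)$ lies entirely inside $\mathbb{R}^{N+1}_+$, so $\omega_\infty$ is a classical solution of a homogeneous constant-coefficient elliptic equation on $B_1(z)$, and standard interior gradient estimates give
\[
|\nabla \omega_\infty(z)| \le C \sup_{w\in B_1(z)} \omega_\infty(w) \le C' e^{-m(|z|-1)} = O(e^{-m|z|}).
\]

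For near-boundary points ($0\le x_0<1$ with $|y_0|$ large) I would set $z_0=(0,y_0)$ and work in the fixed-size half-ball $B_2^+(z_0)=B_2(z_0)\cap \mathbb{R}^{N+1}_+$. On its flat face, $\omega_\infty$ satisfies the Neumann condition $-\partial_x \omega_\infty = g$, with
\[
g(y)=-V_\infty \gamma(\omega_\infty)(y) + \bigl(W*|\gamma(\omega_\infty)|^\theta\bigr)(y)\,|\gamma(\omega_\infty)(y)|^{\theta-2}\gamma(\omega_\infty)(y).
\]
Since $\theta-1\ge 1$ and $|\gamma(\omega_\infty)(y)| \le C e^{-m|y|}$, the factor $|\gamma(\omega_\infty)|^{\theta-1}$ alone already supplies the decay $e^{-m|y|}$, while $W*|\gamma(\omega_\infty)|^\theta$ is globally bounded by Young's inequality applied to the decomposition $W=W_1+W_2\in L^r+L^\infty$, together with the fact that $|\gamma(\omega_\infty)|^\theta \in L^1\cap L^\infty$. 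Hence $|g(y)|\le C e^{-m|y|}$ uniformly on a unit neighbourhood of $y_0$. A Schauder-type boundary estimate for the mixed problem on $B_2^+(z_0)$ then yields
\[
|\nabla \omega_\infty(z)| \le C\bigl(\|\omega_\infty\|_{L^\infty(B_2^+(z_0))} + \|g\|_{C^{0,\alpha}(\partial_0 B_2^+(z_0))}\bigr),
\]
and both summands are $O(e^{-m|y_0|})=O(e^{-m|z|})$, since $|z|=|y_0|+O(1)$ in this regime.

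The main obstacle I anticipate is upgrading the mere $L^\infty$ decay of $g$ to a $C^{0,\alpha}$ bound with the same exponential rate, so that the boundary Schauder estimate can be invoked. This step is however routine: the smoothness $\omega_\infty\in C^\infty([0,+\infty)\times \mathbb{R}^N)$ already provided by Theorem~\ref{th:CZN}, combined with standard Hölder estimates for convolutions of Hölder functions against $L^r+L^\infty$ kernels, propagates the exponential bound from $\gamma(\omega_\infty)$ to $g$. Once this is in place, everything else reduces to linear elliptic regularity on domains of fixed unit diameter.
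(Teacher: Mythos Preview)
Your interior argument (for $x_0\ge 1$) is fine and is indeed the most efficient way to handle that region. The difficulty is entirely in the near-boundary case, and there your proof has a genuine circularity that you have not resolved.

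To apply a boundary Schauder estimate you need not only $\|g\|_{L^\infty(\partial_0 B_2^+(z_0))}=O(e^{-m|y_0|})$ but also $[g]_{C^{0,\alpha}(\partial_0 B_2^+(z_0))}=O(e^{-m|y_0|})$. Already for the \emph{linear} term $-V_\infty\gamma(\omega_\infty)$ this amounts to $[\gamma(\omega_\infty)]_{C^{0,\alpha}(B_1(y_0))}=O(e^{-m|y_0|})$, and on a unit ball this H\"older seminorm is controlled by $\|\nabla_y\omega_\infty(0,\cdot)\|_{L^\infty(B_1(y_0))}$ --- exactly the quantity you are trying to bound. Smoothness of $\omega_\infty$ gives only a global (non-decaying) bound on this seminorm, and an interpolation between the $L^\infty$ decay and a global $C^2$ bound yields at best $|\nabla\omega_\infty|=O(e^{-m|z|/2})$; feeding this back into the Schauder estimate does not improve the exponent, so the bootstrap stalls at rate $m/2$. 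That weaker rate is \emph{not} sufficient for the application in Lemma~\ref{lemub}, which requires $2m(1-\varepsilon)R_s>k|s|$ for all $k\in(0,2m)$.

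The paper avoids this circularity by a different mechanism: it differentiates the equation in each tangential direction $y_i$ to obtain, for $v_i=\partial_{y_i}\omega_\infty$, a problem of the same type,
\[
-\Delta v_i+m^2v_i=0\ \text{in }\mathbb{R}^{N+1}_+,\qquad -\partial_x v_i=-V_\infty v_i+\text{(terms vanishing at infinity)}\ \text{on }\mathbb{R}^N,
\]
checks via elliptic regularity that $v_i\to 0$ at infinity, and then applies directly the barrier comparison of \cite[Theorem~5.1]{CN} with $e^{-m\sqrt{x^2+|y|^2}}$. This yields the sharp rate $m$ for $v_i$ without ever needing to estimate H\"older norms of the boundary data. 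If you want to salvage your approach, one option is to absorb the linear boundary term into a Robin condition $-\partial_x u+V_\infty u=h$ and use that $\|W*|\gamma(\omega_\infty)|^\theta\|_{L^\infty(B_2(y_0))}\to 0$ to make the coupling constant small; but this still requires a careful covering/absorption argument and is considerably less direct than differentiating the equation.
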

\begin{proof} We consider the equation
\begin{equation*} \sqrt{-\Delta +m^2} u + V_\infty u = \left( W*
|u|^\theta \right) |u|^{\theta-2}u \quad\text{in $\mathbb{R}^N$}
\end{equation*} satisfied by $\omega_\infty$.  For any index $i
=1,2,\ldots,N$ we write $v_i=\partial \omega_\infty / \partial y_i$
and observe that $v_i$ satisfies
\begin{equation} \label{eq:14} \sqrt{-\Delta + m^2} v_i + V_\infty v_i
= \\ \theta (W * \omega_\infty^{\theta -1} v_i)
\omega_\infty^{\theta-1} + (\theta -1) (W*\omega_\infty^\theta )
\omega_\infty^{\theta -2} v_i
\end{equation}
or, equivalently,
\begin{align*} &-\Delta v_i + m^2 v_i = 0 \quad\text{in
$\mathbb{R}_{+}^{N+1}$} \\ &-\frac{\partial v_i}{\partial x} =
-V_\infty v_i + \theta (W * \omega_\infty^{\theta -1} v_i)
\omega_\infty^{\theta-1} + (\theta -1) (W*\omega_\infty^\theta )
\omega_\infty^{\theta -2} v_i \quad\text{in $\mathbb{R}^N$}.
\end{align*}

The differentiation of the equation is allowed by the
regularity of the solution $\omega_\infty$, see \cite[Theorem
3.14]{CN}. Moreover, $\omega_\infty \in L^p(\mathbb{R}_{+}^{N+1})$ for
any $p>1$, because it is bounded and decays exponentially fast at
infinity. By elliptic regularity, $\omega_\infty \in
W^{2,p}(\mathbb{R}_{+}^{N+1})$ for any $p>1$, and in particular $v_i
\in L^p(\mathbb{R}_{+}^{N+1})$ for any $p>1$. An interpolation
estimate shows that $\omega_\infty^{\theta-1}v_i \in
L^p(\mathbb{R}_{+}^{N+1})$ for any $p>1$. Then the convolution
$W*(\omega_\infty^{\theta-1} v_i) \in L^\infty(\mathbb{R}_{+}^{N+1})$,
and the term
$(W*(\omega_\infty^{\theta-1}v_i))\omega_\infty^{\theta-1} \in
L^2(\mathbb{R}_{+}^{N+1})$ by the summability properties of
$\omega_\infty$. The term $(W*\omega_\infty^\theta )
\omega_\infty^{\theta -2} v_i \in L^2 (\mathbb{R}_{+}^{N+1})$
trivially.

Now the proof of \cite[Theorem 3.14]{CN} shows that $v_i(x,y)
\to 0$ as $x+|y| \to +\infty$. A
comparison with the function $e^{-m\sqrt{x^2+|y|^2}}$ as in
\cite[Theorem 5.1]{CN} shows the validity of
(\ref{decgrad}).\end{proof}
Fix $\varepsilon \in (0, \frac{2m - k}{2m +k})$. For $R>0$, we consider a symmetric
cut off function $\xi_R \in C^{\infty}(\mathbb{R}_+^{N+1})$, namely
$\xi_R(x, gy) = \xi_R (x,y)$ for all $g \in O(N)$, $x >0$, $y \in
\mathbb{R}^{N}$ such that $\xi_R (z)= 0$ if $|z| \geq R$ and $\xi_R
(z)= 1$ if $|z| \leq R (1 -\epsilon) $ and $\xi_R(z)\in\lbrack0,1]$
for all $z \in\mathbb{R}_{+}^{N+1}.$

Let us define $\omega^R(z) := \omega_\infty(z) \xi_R(z)$, for any $z
\in \mathbb{R}_+^{N+1}$.
\begin{lemma}\label{lemassymp}
As $R\rightarrow\infty$,
\begin{align}
\left\vert\iint_{\mathbb{R}^{N+1}_+}  \left\vert
\nabla\omega_{\infty}\right\vert ^{2}-\left\vert
\nabla\omega^{R}\right\vert ^{2}\right\vert & =O(R^{N-1}e^{-2 {m}(1-\varepsilon)R}),\label{b}\\ \left\vert
\mathbb{D}(\omega_{\infty})-\mathbb{D}(\omega ^{R})\right\vert &
=O(R^{N-1}e^{- \theta m (1-\varepsilon)R}). \label{c}%
\end{align}
\end{lemma}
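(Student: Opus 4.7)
Both bounds rely on the fact that $\omega^R \equiv \omega_\infty$ on $\{|z| \leq R(1-\varepsilon)\}$, so the differences are entirely supported on the exterior region $\Omega_R := \{z \in \mathbb{R}_+^{N+1} : |z| \geq R(1-\varepsilon)\}$, where the exponential decay estimates (\ref{eq:13}) and (\ref{decgrad}) take over.

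For (\ref{b}) I would expand
\[
|\nabla \omega_\infty|^2 - |\nabla \omega^R|^2 = (1-\xi_R^2)|\nabla \omega_\infty|^2 - 2 \xi_R \omega_\infty \nabla \xi_R \cdot \nabla \omega_\infty - \omega_\infty^2 |\nabla \xi_R|^2,
\]
noting that every summand vanishes on $\{|z| < R(1-\varepsilon)\}$, where $\xi_R \equiv 1$ and $\nabla \xi_R \equiv 0$. Using the bound $|\nabla \xi_R|_\infty \leq C/(\varepsilon R)$ together with the pointwise estimates $|\omega_\infty(z)|, |\nabla \omega_\infty(z)| \leq C e^{-m|z|}$, each summand is majorized pointwise by $C e^{-2m|z|}$ on $\Omega_R$, and a standard tail estimate in spherical coordinates on the half-space gives (\ref{b}).

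For (\ref{c}) I would invoke the algebraic splitting of the integrand already used in (\ref{difD}),
\begin{align*}
&\gamma(\omega_\infty)(x)^\theta \gamma(\omega_\infty)(y)^\theta - \gamma(\omega^R)(x)^\theta \gamma(\omega^R)(y)^\theta \\
&\qquad = \gamma(\omega_\infty)(x)^\theta \bigl[\gamma(\omega_\infty)(y)^\theta - \gamma(\omega^R)(y)^\theta\bigr] \\
&\qquad\quad + \gamma(\omega^R)(y)^\theta \bigl[\gamma(\omega_\infty)(x)^\theta - \gamma(\omega^R)(x)^\theta\bigr],
\end{align*}
together with the key identity $\gamma(\omega_\infty)^\theta - \gamma(\omega^R)^\theta = \gamma(\omega_\infty)^\theta (1 - \xi_R(0,\cdot)^\theta)$, whose support lies in $\{y \in \mathbb{R}^N : |y| \geq R(1-\varepsilon)\}$. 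Writing $W = W_1 + W_2$ as usual, the $W_2$-part reduces by Fubini to
\[
|W_2|_\infty \, |\gamma(\omega_\infty)^\theta|_1 \int_{|y| \geq R(1-\varepsilon)} \gamma(\omega_\infty)(y)^\theta \, dy \lesssim R^{N-1} e^{-\theta m (1-\varepsilon) R},
\]
whereas the $W_1$-part is controlled by Young's inequality (\ref{eq:hlss}) in terms of $\bigl|\gamma(\omega_\infty)^\theta \chi_{\{|y|\geq R(1-\varepsilon)\}}\bigr|_{2r/(2r-1)}$, which again produces a tail integral of the same exponential order.

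\textbf{Main obstacle.} The bookkeeping is most delicate in the $W_1$ term: Young's inequality makes us integrate the $(2r\theta/(2r-1))$-power of $\gamma(\omega_\infty)$, which is a larger exponent than $\theta$; after taking the $(2r-1)/(2r)$-th root the exponential rate comes back to exactly $\theta m (1-\varepsilon)$, while the polynomial prefactor matches the claimed $R^{N-1}$ (any excess polynomial growth being absorbed by slightly shrinking $\varepsilon$ within the permitted range $(0,(2m-k)/(2m+k))$, at the cost only of a harmless change in constants).
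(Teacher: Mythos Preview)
Your proposal is correct and follows essentially the same route as the paper: for (\ref{b}) the paper also localizes to $\{|z|>(1-\varepsilon)R\}$ and replaces everything by $Ce^{-2m|z|}$ via (\ref{eq:13}) and (\ref{decgrad}) before integrating the tail, while for (\ref{c}) it performs the very same algebraic splitting of the integrand, decomposes $W=W_1+W_2$, and bounds the $W_1$ contribution by Young's inequality with the exponents $\frac{2r}{2r-1}$ and $r$ exactly as you describe. Your expansion of $|\nabla\omega_\infty|^2-|\nabla\omega^R|^2$ is a bit more explicit than the paper's one-line estimate, and your worry about the polynomial prefactor in the $W_1$ term is unfounded (the power you obtain is $R^{(N-1)(2r-1)/(2r)}\le R^{N-1}$, so no $\varepsilon$-adjustment is needed), but these are cosmetic differences only.
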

\begin{proof} The proof of (\ref{b}) is standard. Indeed, using (\ref{decgrad}) and cylindrical coordinates in $\mathbb{R}_{+}^{N+1}$,
\begin{align*}
\left\vert \iint_{\mathbb{R}_{+}^{N+1}} |\nabla \omega^R|^2 - |\nabla \omega_\infty|^2 \right\vert &\leq C \iint_{\{z \in \mathbb{R}_{+}^{N+1} \mid (1-\varepsilon)R<|z| \}} |\nabla \omega_\infty|^2  \\
&\leq C_1 \iint_{\{z \in \mathbb{R}_{+}^{N+1} \mid (1-\varepsilon)R<|z| \}} e^{-2m |z|}\, dz \\
&\leq C_1 R^{N-1}e^{-2m(1-\varepsilon)R}.
\end{align*}
To prove (\ref{c}), we recall that $W = W_1 +W_2 \in
L^r(\mathbb{R}^N) + L^\infty (\mathbb{R}^N)$. The difference
$\mathbb{D}(\gamma (\omega_\infty))-\mathbb{D}(\gamma(\omega^R))$ can
be split in two parts, the one with $W_1$ and the one with $W_2$. The
former can be estimated as follows:
\begin{multline*}
\left|
\mathbb{D}(\gamma(\omega_\infty))-\mathbb{D}(\gamma(\omega^R)) \right| \\
\leq \int_{\mathbb{R}^N \times \mathbb{R}^N} \left|
\gamma(\omega_\infty)(x)|^\theta |\gamma (\omega_\infty)(y)|^\theta -
|\gamma(\omega^R)(x)|^\theta |\gamma (\omega^R)(y)|^\theta \right|
W_1(x-y)\, dx \, dy \\
\leq 2 \int_{\mathbb{R}^N \times
\mathbb{R}^N} W_1(x-y) |\gamma(\omega_\infty)(x)|^\theta \left|
|\gamma(\omega_\infty)(y)|^\theta - |\gamma (\omega^R)(y)|^\theta
\right| dx \, dy \\
\leq 2 \left\Vert |\gamma(\omega_\infty)|^\theta
- |\gamma(\omega^R)|^\theta \right\Vert_{\frac{2r}{2r-1}} \left\Vert
\gamma(\omega_\infty) \right\Vert_{\frac{2r\theta}{2r-1}}^\theta
\left\Vert W_1 \right\Vert_r \\
\leq C \left(
\int_{(1-\varepsilon)R}^\infty t^{N-1}e^{-m \frac{2r\theta}{2r-1} t}
\, dt \right)^{\frac{2r-1}{2r}} = C_2 R^{N-1}e^{- \theta m (1-\varepsilon)R}.
\end{multline*}
The latter is simpler, since we use directly the
$L^\infty$-norm of $W_2$.
\end{proof}
For $s \in\mathbb{R}^{N}$, set $R_s := \frac{k + 2m}{4m} \, |s|$.  Since $k \in \bigl(0, 2m \bigr)$, it results that $R_s \in (0, |s|)$. Hence $|s| - R_s
\to + \infty$, as $|s| \to + \infty$.
With this notation, we define the function
\[ \omega^{R_s}_s(z) := \omega_\infty(x, y -s) \xi_{R_s}(x, y -s)
\] where $z=(x,y) \in \mathbb{R}^{N+1}$.

\begin{lemma}\label{lemub}
There exist $\varrho_{0}$, $d_{0}\in(0,\infty)$ such that
\[
I (t(\omega^{R_s}_{s}))\leq E_{V_{\infty}}-d_{0}%
e^{-k\left\vert y\right\vert }\quad\text{for all $t\geq 0$},
\]
provided that $\left\vert s\right\vert \geq\varrho_{0}$
\end{lemma}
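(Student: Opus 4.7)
The plan is to split, for $v:=\omega^{R_s}_s$, the functional as
\[
I(tv)=J_{V_\infty}(tv)+\frac{t^2}{2}\int_{\mathbb{R}^N}\bigl(V(y)-V_\infty\bigr)\gamma(v)^2\,dy,
\]
and to control the two summands separately. Since $V_\infty$ is constant and $W$ is a convolution kernel, $J_{V_\infty}$ is translation invariant and $J_{V_\infty}(tv)=J_{V_\infty}(t\omega^{R_s})$. By Lemma \ref{lemassymp} the quantities $A^{R_s}:=\|\omega^{R_s}\|^2+V_\infty|\gamma(\omega^{R_s})|_2^2$ and $\mathbb{D}(\omega^{R_s})$ agree with the corresponding quantities at $\omega_\infty$ up to an error of order $R_s^{N-1}e^{-2m(1-\varepsilon)R_s}$. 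Using the Nehari characterization (\ref{eq:10}) together with the explicit maximum of the map $t\mapsto \tfrac{t^2}{2}a-\tfrac{t^{2\theta}}{2\theta}b$, a short Taylor expansion at $\omega_\infty$ yields
\[
\max_{t\geq 0}J_{V_\infty}(t\omega^{R_s})\leq E_{V_\infty}+C\,R_s^{N-1}e^{-2m(1-\varepsilon)R_s}.
\]
With $R_s=\frac{k+2m}{4m}|s|$ the exponent equals $\frac{(1-\varepsilon)(k+2m)}{2}|s|$, and the condition $\varepsilon<\frac{2m-k}{2m+k}$ is exactly what makes this strictly larger than $k|s|$, so the above correction is $o(e^{-k|s|})$.

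For the perturbation term I exploit (V2). The condition $k<2m$ forces $R_s<|s|$, hence $|s|-R_s\to+\infty$, so for $|s|$ large enough the support of $\gamma(\omega^{R_s}_s)$ lies in $\{|y|\geq R\}$, where $V(y)-V_\infty\leq -e^{-k|y|}$. Changing variables $y=z+s$, restricting to the region $\{|z|\leq(1-\varepsilon)R_s\}$ on which $\xi_{R_s}\equiv 1$, and using $|z+s|\leq|s|+|z|$,
\[
\int_{\mathbb{R}^N}e^{-k|y|}\gamma(v)(y)^2\,dy\geq e^{-k|s|}\int_{|z|\leq(1-\varepsilon)R_s}e^{-k|z|}\gamma(\omega_\infty)(z)^2\,dz\geq c_0\,e^{-k|s|},
\]
for some $c_0>0$ independent of $s$: the right-hand integral converges to a positive limit because $\gamma(\omega_\infty)$ is continuous and strictly positive near the origin (Theorem \ref{th:CZN}) and $e^{-k|z|}\gamma(\omega_\infty)(z)^2\in L^1(\mathbb{R}^N)$ by the exponential decay (\ref{eq:13}). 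Combining the two steps,
\[
I(tv)\leq E_{V_\infty}+o(e^{-k|s|})-\frac{c_0 t^2}{2}\,e^{-k|s|}.
\]

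To make this bound uniform in $t\geq 0$ I examine the unique positive critical point $t^\ast(s)$ of $t\mapsto I(tv)$, determined by
\[
t^\ast(s)^{2\theta-2}=\frac{\|v\|^2+\int_{\mathbb{R}^N} V(y)\gamma(v)^2\,dy}{\mathbb{D}(v)}.
\]
As $|s|\to+\infty$, translation invariance and the truncation estimates force the right-hand side to tend to the analogous ratio for $\omega_\infty$, which equals $1$ by the Nehari identity satisfied by the critical point $\omega_\infty$ of $J_{V_\infty}$. Hence $t^\ast(s)\to 1$, and we may assume $t^\ast(s)\geq\tfrac12$ whenever $|s|\geq\varrho_0$. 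Evaluating the combined inequality at $t=t^\ast(s)$ then gives $\sup_{t\geq 0}I(tv)\leq E_{V_\infty}-d_0\,e^{-k|s|}$ with, say, $d_0:=c_0/16$, which is the desired conclusion.

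The main obstacle is the matching of the exponential rates: the truncation loss in Lemma \ref{lemassymp} must be strictly dominated by the gain $e^{-k|s|}$ coming from (V2). The specific choices $R_s=\frac{k+2m}{4m}|s|$ and $\varepsilon\in\bigl(0,\frac{2m-k}{2m+k}\bigr)$ are designed precisely so that $2m(1-\varepsilon)R_s>k|s|$ holds with a positive margin; once this geometric calibration is in place, the rest of the argument is an exercise in translation invariance and homogeneity.
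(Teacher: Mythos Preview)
Your argument is correct and follows essentially the same route as the paper: the only organizational difference is that the paper first confines the maximizing $t$ to a compact interval $[t_1,t_2]$ and then estimates $I(t\omega^{R_s}_s)$ directly, while you keep the $t$-dependence explicit in the split $I(tv)=J_{V_\infty}(tv)+\tfrac{t^2}{2}\int(V-V_\infty)\gamma(v)^2$ and control the maximizer via $t^\ast(s)\to 1$ at the end. The remaining ingredients---translation invariance of $J_{V_\infty}$, the truncation errors from Lemma~\ref{lemassymp} (extended in the obvious way to the $L^2$ terms via (\ref{eq:13})), the use of (V2) on the support of $\gamma(\omega^{R_s}_s)$, and the calibration $2m(1-\varepsilon)R_s>k|s|$---are identical.
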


\begin{proof}
For $u \in \h$ we have by (\ref{eq:21}) that
$\max_{t\geq0}I(tu)=I(t_{u}u)$ if and only if
\[
t_{u}=\left( \frac{\| u\|^2 + \int_{\mathbb{R}^{N}} V(y)\gamma(u)^ 2 \,
dy}{\mathbb{D}(u)}\right)^{1/(2\theta-2)}.
\]
Indeed
\begin{equation}\label{eq:22}
\| u\|^{2} +
\int_{\mathbb{R}^{N}} V(y) \gamma(u)^2 dy
\geq
\left( 1 - \frac{V_0}{m} \right) \int_{\mathbb{R}_{+}^{N+1}} |\nabla u|^2 + {m(m-V_0)} \int_{\mathbb{R}_{+}^{N+1}} |u|^2>0.
\end{equation}
So, since $\omega_{\infty}^{R_{s}}\rightarrow \omega_{\infty}$ in
$\h$ as $\left\vert s\right\vert \rightarrow \infty,$ and taking into
account that $I_{V_\infty}(\omega_\infty)=
\max_{t\geq0}I_{V_\infty}(t(\omega_{\infty}))$ there exist
$0<t_{1}<t_{2}<+ \infty$ such that
\[
\max_{t\geq0}I(t(\omega^{R_{s}}_{s}))=\max_{t_{1}\leq t\leq
t_{2}}I(t(\omega^{R_{s}}_{s}))
\]
for all large enough $\left\vert s\right\vert $.

Let $t\in\lbrack t_{1},t_{2} \rbrack$. Write $V=V^{+}-V^{-}$, where $V^{+}(x)=\max \{V(x),0\}$
and $V^{-}(x)=\max\{-V(x),0\}$, and remark that the assumption $V_\infty>0$ implies $V(x)=V^{+}(x)$
whenever $|x|$ is sufficiently large. Assumption $(V_{2})$ yields therefore
\begin{align*}
\int_{\mathbb{R}^{N}}V( y) (t \gamma(\omega^{R_{s}
}_{s}))^{2} (y) \, dy & \leq t^{2}\int_{\left\vert y \right\vert \leq
R_{s}} V^{+}(y +s)(\gamma(\omega^{R_{s}}))^{2}(y)dy\\
& \leq
t^{2}\int_{\left\vert y\right\vert \leq R_{s}}(V_{\infty}
-c_{0}e^{-k \left\vert y+s \right\vert
})(\gamma(\omega_{\infty}))^{2}(y)dx\\
&\leq\int_{\mathbb{R}^{N}}V_{\infty}\left( t \gamma(\omega_{\infty})
\right) ^{2}-\left( c_{0}t_{1}^{2}\int_{\left\vert y \right\vert
\leq1}e^{-k \left\vert y\right\vert }
(\gamma(\omega_{\infty}))^{2}(y)dy\right) e^{-k \left\vert
s\right\vert }
\end{align*} for $\left\vert s\right\vert $ large enough.

Therefore, using Lemma \ref{lemassymp}, we get
\begin{align*}
I(t (\omega^{R_{s}})_{s}) & =\frac{1}{2}\left\Vert
t(\omega^{R_{s}})_{s}\right\Vert^{2} + \frac{1}{2}
\int_{\mathbb{R}^{N}} V(y)\left( t \gamma(\omega_{\infty}) \right)^{2}
dy -\frac{1}{2 \theta} \mathbb{D}(t \omega^{R_{s}}_{s})\\
& \leq
\frac{1}{2}\left\Vert t \omega_\infty \right\Vert^{2} + \frac{1}{2}
\int_{\mathbb{R}^{N}} V_{\infty}\left( t \gamma(\omega_{\infty})
\right)^{2} dy -\frac{1}{2 \theta}
\mathbb{D}(t\omega_{\infty})\\
&\qquad -Ce^{-k \left\vert s\right\vert
}+O(R_s^{N-1} e^{-2 m (1-\varepsilon)R_{s}})\\
&
\leq\max_{t\geq0}I_{V_{\infty}}(t\omega_{\infty})-d_{0}e^{-\kappa\left\vert
s\right\vert }\\
& =E_{V_{\infty}}-d_{0}e^{-k \left\vert
s\right\vert }%
\end{align*}
for sufficiently large $\left\vert s\right\vert $,
because our choices of $\varepsilon$ and $R_{s}$ guarantee that $2 m (1-\varepsilon )R_{s}>k \left\vert s\right\vert$.
\end{proof}

\section{Proof of theorem \ref{th:main}}

The proof of Theorem \ref{th:main} is now immediate. The Euler
functional $I$ satisfies the geometric assumptions of the Mountain
Pass Theorem (see \cite{ar}) on $H^1(\mathbb{R}_{+}^{N+1})$. Since it also satisfies the Palais-Smale
condition, as we showed in the previous sections, we conclude that $I$
possesses at least a critical point $v \in H^1(\mathbb{R}_{+}^{N+1})$. In addition,
\[
I(v) = c = \inf_{\gamma \in \Gamma} \max_{0 \leq t \leq 1} I(\gamma (t)),
\]
where $\Gamma = \{\gamma \in C([0,1],H^1(\mathbb{R}_{+}^{N+1})) \mid \gamma(0)<0,\,
I(\gamma(1))<0\}$.

To prove that $v \geq 0$, we notice that, reasoning as in (\ref{eq:22}), the map $t \mapsto I(tw)$ has
one and only one strict maximum point at $t=1$ whenever $w\in
H^1(\mathbb{R}_{+}^{N+1})$ is a critical point of $I$. Since $I(|w|)
\leq I(w)$ for all $w \in H^1(\mathbb{R}_{+}^{N+1})$, and
\[
I(t |w|) \leq I(t w) < I(w) \quad\text{for every $t >0$, $t \neq 1$},
\]
we conclude that
\[
c \leq \sup_{t \geq 0} I(t|v|) \leq I(v) = c.
\]
We claim that $|v|$ is also a critical point of $I$. Indeed,
otherwise, we could deform the path $t \mapsto t|v|$
into a path $\gamma \in \Gamma$ such that $I(\gamma(t))<c$ for
every $t \geq 0$, a contradiction with the definition of $c$.

\section{Further properties of the solution}

We collect in the next statement some additional features of the weak
solution found above.

\begin{theorem} \label{th:7.1}
Let $u$ be the solution to equation (\ref{eq:1})
provided by Theorem \ref{th:main}. Then $u \in C^\infty(\mathbb{R}^N)
\cap L^q (\mathbb{R}^N)$ for every $q \geq 2$. Moreover,
\begin{equation} \label{eq:30}
0<u(y) \leq C e^{-m |y|}.
\end{equation}
\end{theorem}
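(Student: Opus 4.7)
The plan has three strands: regularity and $L^q$-integrability by a standard bootstrap; strict positivity via the maximum principle and Hopf's lemma; and the pointwise exponential decay by a super-solution comparison, mirroring Theorem~\ref{th:CZN}. Throughout, $u = \gamma(v)$ where $v \in H^1(\mathbb{R}_+^{N+1})$ is the Mountain Pass critical point of $I$ provided by Theorem~\ref{th:main}. I would first bootstrap: starting from $u \in H^{1/2}(\mathbb{R}^N) \hookrightarrow L^p$ for $p \in [2, 2N/(N-1)]$, Young's inequality (\ref{eq:hlss}) together with hypothesis (W) implies that $W * |u|^\theta$ lies in a Lebesgue class strictly better than $|u|^{\theta-1}$, so the Neumann source $(W*|u|^\theta)|u|^{\theta-2} u$ has more summability than $u$ itself. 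Inserting this into the linear Neumann system for $v$ and invoking $L^p$-regularity for the half-space Poisson problem together with the trace bounds (\ref{eq:5})--(\ref{eq:21}), one iteratively climbs the Lebesgue scale until $u \in L^\infty \cap L^q$ for every $q \geq 2$. Once $u$ is bounded, the right-hand side is H\"older continuous and a standard elliptic bootstrap gives $v \in C^\infty([0,+\infty)\times\mathbb{R}^N)$, whence $u \in C^\infty(\mathbb{R}^N)$. Strict positivity then follows by combining the strong maximum principle applied to $-\Delta v + m^2 v = 0$ in the open half-space (so either $v \equiv 0$ or $v > 0$ in $\mathbb{R}_+^{N+1}$) with Hopf's lemma, which excludes a boundary zero $v(0, y_0) = 0$: such a zero would force $\partial_x v(0, y_0) < 0$, incompatible with the Neumann condition whose right-hand side vanishes at $y_0$.

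For the decay estimate I would first observe that $u(y) \to 0$ as $|y| \to \infty$, extracted from $u \in L^q$ together with uniform local $L^\infty$ bounds on shifted balls, and that $W(x) \to 0$, so the Neumann source reduces to $-(V_\infty + o(1))v$ for large $|(x,y)|$. Taking $V_\infty > 0$ into account, the explicit barrier $\Phi(x,y) = e^{-m\sqrt{x^2 + |y|^2}}$ is a super-solution of the linearised problem outside a sufficiently large ball: in the bulk $-\Delta \Phi + m^2 \Phi = (mN/R)\Phi \geq 0$ with $R = \sqrt{x^2+|y|^2}$, and on the boundary $-\partial_x \Phi|_{x=0} = 0 \geq -V_\infty \Phi|_{x=0}$. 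Choosing $C$ so that $v \leq C \Phi$ on the sphere $\{|(x,y)| = R_0\}$ for $R_0$ large enough, and applying the maximum principle in the exterior half-space, yields $v \leq C \Phi$ there, from which the trace inequality $u(y) \leq C e^{-m|y|}$ follows by restricting to $x = 0$. Should the strict inequality required by the maximum principle cause trouble, one may work with the shifted family $\Phi_\sigma(x,y) = e^{-(m-\sigma)\sqrt{x^2+|y|^2}} e^{-\sigma x}$ for small $\sigma > 0$ as in Theorem~\ref{th:CZN}, recovering the rate $m$ in the limit $\sigma \to 0^+$.

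The main obstacle is making the super-solution comparison rigorous on the unbounded exterior: the nonlinear term, although $o(1) v$ at infinity, has to be absorbed into an elliptic inequality for the difference $v - C\Phi$, and the maximum principle in an unbounded domain needs the preliminary rough decay of $v$ at infinity to be genuinely quantitative. This usually proceeds by an iterative refinement, in which a soft initial decay is upgraded to the sharp exponential rate $m$ via the barrier argument. Apart from this, the remaining ingredients -- the $L^p$ bootstrap, the smoothness, and Hopf's lemma -- are routine; the non-symmetric potential $V$ enters only through the nonnegative perturbation $V_\infty - V$ which vanishes at infinity, so that the asymptotic equation is indistinguishable from the limit problem governed by $V_\infty$.
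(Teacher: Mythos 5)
Your proposal is correct and follows essentially the same route as the paper: regularity and $L^q$-bounds by the bootstrap of \cite{CN} (Section 3), and the decay via comparison with the barrier $C e^{-m\sqrt{x^2+|y|^2}}$ in an exterior half-ball, using the strong maximum principle and Hopf's lemma together with the facts that $(W*|u|^\theta)|u|^{\theta-2}u$ vanishes at infinity and $V>0$ far out. The only cosmetic difference is that the paper implements the Hopf step by the auxiliary function $w e^{\lambda x}$, $0<\lambda<m$, at a putative negative boundary minimum rather than by your $\sigma$-shifted barrier family, and it does not need the iterative refinement you mention, since the soft decay $v\to 0$ at infinity already suffices to locate the infimum on the boundary.
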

\begin{proof}
The regularity of $u$ can be established by mimicking
  the proofs in Section 3 of \cite{CN}.
  The potential function $V$ is
  harmless, being bounded from above and below.

To prove the exponential decay at infinity, we introduce a comparison function
\[
 W_R(x,y) = C_R e^{-m\sqrt{x^2+|y|^2}}, \quad \text{for every $(x,y) \in \mathbb{R}_{+}^{N+1}$},
\]
and we will fix $R>0$ and $C_R>0$ in a suitable manner. We also introduce the notation
\begin{align*}
 B_R^{+} &= \left\{ (x,y) \in \mathbb{R}_{+}^{N+1} \mid \sqrt{x^2+|y|^2} < R \right\} \\
 \Omega_R^{+} &= \left\{ (x,y) \in \mathbb{R}_{+}^{N+1} \mid \sqrt{x^2+|y|^2} > R \right\} \\
 \Gamma_R &= \left\{ (0,y) \in \partial \mathbb{R}_{+}^{N+1} \mid |y| \geq R \right\}.
\end{align*}
It is easily seen that
\begin{equation*}
 \left\{
 \begin{array}{ll}
  -\Delta W_R + m^2 W_R \geq 0 &\text{in $\Omega_R^{+}$}\\
  -\frac{\partial W_R}{\partial x} =0 &\text{on $\Gamma_R^{+}$}.
 \end{array}
 \right.
\end{equation*}
Call $w(x,y)=W_R(x,y)-v(x,y)$, and remark that $-\Delta w + m^2 w \geq 0$ in $\Omega_R^{+}$. If $C_R = e^{mR} \max_{\partial B_R^{+}} v$, then
$w \geq 0$ on $\partial B_R^{+}$ and $\lim_{x+|y| \to +\infty} w(x,y)=0$. We claim that $w \geq 0$ in the closure
$\overline{\Omega_R^{+}}$.

If not, $\inf_{\overline{\Omega_R^{+}}} w <0$, and the strong maximum principle provides a point $(0,y_0) \in \Gamma_R$ such that
\[
 w(0,y_0)=\inf_{\overline{\Omega_R^{+}}} w < w(x,y)\quad\text{for every $(x,y) \in  \Omega_R^{+}$}.
\]
For some $0<\lambda<m$, we introduce $z(x,y)=w(x,y)e^{\lambda x}$. As before, $\lim_{x + |y| \to +\infty} z(x,y)=0$ and $z \geq 0$
on $\partial B_R^{+}$. Since
\[
 0 \leq -\Delta w +m^2 w = e^{-\lambda x} \left( -\Delta z + 2 \lambda \frac{\partial z}{\partial x} + (m^2-\lambda^2)z,  \right)
\]
the strong maximum principle applies and yields that $\inf_{\Gamma_R} z = \inf_{\overline{\Omega_R^{+}}} z < z(x,y)$ for every
$(x,y) \in \Omega_R^{+}$. Therefore $z(0,y_0) = \inf_{\Gamma_R} z = \inf_{\Gamma_R} w <0$. Hopf's lemma now gives
\[
 -\frac{\partial w}{\partial x} (0,y_0)	-\lambda w(0,y_0)<0.
\]
But this is impossible. Indeed
\[
 -\frac{\partial w}{\partial x} (0,y_0) = -V(y_0) v(0,y_0) - (W*|v|^\theta)|v(0,y_0)|^{\theta -2}v(0,y_0),
\]
and hence
\[
 -\frac{\partial w}{\partial x} (0,y_0) -\lambda v(0,y_0)= -\lambda v(0,y_0)-V(y_0) v(0,y_0) - (W*|v|^\theta)|v(0,y_0)|^{\theta -2}v(0,y_0).
\]
Recall that $v(0,y_0)<0$ and $\lambda >0$; if we can show that $$-V(y_0) v(0,y_0) - (W*|v|^\theta)|v(0,y_0)|^{\theta -2}v(0,y_0) \geq 0,$$ we
will be done. First of all, we recall that (see \cite[pag. 70]{CN} and also \cite[Lemma 2.3]{css})
\[
 \lim_{|y| \to +\infty} (W*|v|^\theta)|v(0,y)|^{\theta -2}v(0,y)=0,
\]
since $\lim_{|y| \to +\infty} W(y)=0$. So we pick $R>0$ so large that  $|(W*|v|^\theta)|v(0,y_0)|^{\theta -2}v(0,y_0)|$ is very small. Choosing
$R$ even larger, we can also assume that $V(y_0)>0$, since $V_\infty>0$. Hence $-V(y_0) v(0,y_0) - (W*|v|^\theta)|v(0,y_0)|^{\theta -2}v(0,y_0) \geq 0$, and the proof
is finished.

To summarize, we have proved that, whenever $x+|y|$ is sufficiently large, then
\[
 v(x,y) \leq W_R(x,y),
\]
and hence the validity of (\ref{eq:30}).

\vspace{1cm}

\noindent{\bfseries Acknowledgements.} The first author is supported by  MIUR Project (PRIN 2009) \emph{"Variational and topological methods in the study of nonlinear phenomena"} and by GNAMPA (INDAM) Project 2013
\emph{"Problemi differenziali di tipo ellittico nei
fenomeni fisici non lineari"}. The second author is supported by MIUR Project (PRIN 2009) \emph{"Teoria dei punti critici e metodi perturbativi per equazioni differenziali nonlineari"}.

\end{proof}



\begin{thebibliography}


\bibitem{a} N. Ackermann, \emph{On a periodic Schr\"{o}dinger equation with
nonlocal superlinear part}, Math. Z. \textbf{248} (2004), 423-443.

\bibitem{ar} A. Ambrosetti, P.H. Rabinowitz, \emph{Dual variational methods
in critical point theory and applications}, J. Funct. Anal. \textbf{14}
(1973), 349-381.



\bibitem{CSM} X.Cabr\'e, J. Sol\`{a}-Morales,
\emph{Layers solutions in a half-space for boundary reactions}, Comm. Pure Applied Math. \textbf{58}
(2005), 1678--1732.


\bibitem{CT} X.Cabr\'e, J. Tan,
\emph{Positive solutions of nonlinear problems involving the square root of the Laplacian},
Adv. Math. \textbf{224} (2010), 2052-2093.


\bibitem{cho} Y. Cho, T. Ozawa, \emph{On the semirelativistic Hartree-type equation}, SIAM J. Math. Anal. \textbf{38} (2006), no.4, 1060--1074.

\bibitem{CS} L. Caffarelli, L. Silvestre, \emph{An extension problem related to the fractional Laplacian}, Communications in Partial Differential Equations \textbf{32} (2007), 1245--1260.


\bibitem{ccs} S. Cingolani, M. Clapp, S.Secchi, \emph{
Multiple solutions to a magnetic nonlinear Choquard equation},
Zeitschrift f\"{u}r Angewandte Mathematik und Physik (ZAMP), \textbf{63} (2012),  233--248.



\bibitem{ccs1} S. Cingolani, M. Clapp, S.Secchi, \emph{Intertwining
semiclassical solutions to a Schr\"{o}dinger-Newton system}, Discrete Continuous Dynmical Systems Series S \textbf{6} (2013), 891--908.

\bibitem{css} S. Cingolani, S. Secchi, M. Squassina, \emph{Semiclassical
limit for Schr\"{o}dinger equations with magnetic field and Hartree-type
nonlinearities}, Proc. Roy. Soc. Edinburgh \textbf{140 A} (2010), 973--1009.

\bibitem{CNbis} V. Coti Zelati, M. Nolasco, \emph{Ground states for pseudo-relativistic Hartree equations of critical type}, preprint 2012. \url{http://arxiv.org/abs/1202.2446}


\bibitem {CN} V. Coti Zelati, M. Nolasco, \emph{Existence of ground state for nonlinear, pseudorelativistic Schr\"{o}dinger equations}, Rend. Lincei Mat. Appl. \textbf{22} (2011), 51--72.


\bibitem{es} A. Elgart, B, Schlein, \emph{Mean field dynamics of boson stars}, Comm. Pure Appl. Math. \textbf{60} (2007), 500-545.


\bibitem {fl} J. Fr\"{o}hlich, E. Lenzmann, \emph{Mean-field limit of
quantum Bose gases and nonlinear Hartree equation}, in S\'{e}minaire:
\'{E}quations aux D\'{e}riv\'{e}es Partielles2003--2004, Exp. No. XIX, 26
pp., \'{E}cole Polytech., Palaiseau, 2004.


\bibitem{fjl} J. Fr\"{o}hlich, J. Jonsson, E. Lenzmann, \emph{Boson stars as solitary waves}, Comm. Math. Phys. \textbf{274} (2007), 1--30.


\bibitem{Lenz2} E. Lenzmann, \emph{Well-posedness for Semi-relativistic  Hartree equations with Critical type}, Math. Phys. Anal. Geom. \textbf{10} (2007), 43--64.

\bibitem{Lenz} E. Lenzmann, \emph{Uniqueness of ground states for pseudo relativistic Hartree equations}, Analysis and PDE \textbf{2} (2009), 1--27.


\bibitem {lieb} E.H. Lieb, \emph{Existence and uniqueness of the minimizing
solution of Choquard's nonlinear equation}, Stud. Appl. Math.
\textbf{57} (1977), 93--105.


\bibitem{ls} E.H. Lieb, B. Simon, \emph{The Hartree-Fock theory for Coulomb
systems}, Comm. Math. Phys. \textbf{53} (1977), 185--194.

\bibitem{Lieb1983} E.H. Lieb, \emph{Sharp constants in the Hardy-Littlewood-Sobolev and related inequalities}, The Annals of Mathematics \textbf{118} (1983), 349--374.

\bibitem{ly} E.H. Lieb, H.-T. Yau, \emph{The Chandrasekhar theory of stellar collapse as the limit of quantum mechanics}, Comm. Math. Phys. \textbf{112} (1987), 147--174.

\bibitem{l2} P.-L. Lions, \emph{The Choquard equation and related
questions}, Nonlinear Anal. T.M.A. \textbf{4} (1980), 1063--1073.


\bibitem{mz} L. Ma, L. Zhao, \emph{Classification of positive solitary
solutions of the nonlinear Choquard equation}, Arch. Rational Mech.
Anal. \textbf{195} (2010), 455-467.



\bibitem{MeZo} M. Melgaard, F. Zongo, \emph{Multiple solutions of the quasirelativistic Choquard equation}, J. Math. Phys. \textbf{53} (2012), no. 3, 033709, 12 pp.

\bibitem{mpt} I.M. Moroz, R. Penrose, P. Tod, \emph{Spherically-symmetric
solutions of the Schr\"{o}dinger-Newton equations}, Topology of the Universe
Conference (Cleveland, OH, 1997), Classical Quantum Gravity \textbf{15}
(1998), 2733--2742.

\bibitem{mt} I.M. Moroz, P. Tod, \emph{An analytical approach to the
Schr\"{o}dinger-Newton equations}, Nonlinearity \textbf{12} (1999), 201--216.

\bibitem{Mugnai} D. Mugnai, \emph{The pseudorelativistic Hartree
  equation with a general nonlinearity: existence, non existence and
 variational identities}, preprint \url{http://arxiv.org/abs/1206.0853}.

\bibitem{pe2} R. Penrose, \emph{Quantum computation, entanglement and state
reduction}, R. Soc. Lond. Philos. Trans. Ser. A Math. Phys. Eng. Sci.
\textbf{356} (1998), 1927--1939.

\bibitem{pe3} R. Penrose, The road to reality. A complete guide to
the laws of the universe, Alfred A. Knopf Inc., New York 2005.

\bibitem{s} S. Secchi, \emph{A note on Schr\"{o}dinger--Newton systems with
decaying electric potential}, Nonlinear Analysis \textbf{72} (2010), 3842--3856.

\bibitem{Tartar} L. Tartar, An introduction to Sobolev spaces. Springer, 2007.

\bibitem{t} P. Tod, T\emph{he ground state energy of the
Schr\"{o}dinger-Newton equation}, Physics Letters A \textbf{280} (2001), 173--176.

\bibitem{ww} J. Wei, M. Winter, \emph{Strongly interacting bumps for the
Schr\"{o}dinger--Newton equation}, J. Math. Phys. \textbf{50} (2009), 012905.

\bibitem{W} M. Willem, Minimax theorems. Progress in
  Nonlinear Differential Equations and their Applications,
  24. BirkhâˆšÂ§user Boston, Inc., Boston, MA, 1996.

\end{thebibliography}
\end{document}